\newenvironment{enumeratei}{\begin{enumerate}[\upshape (i)]}%
{\end{enumerate}}
\newenvironment{enumerater}{\begin{enumerate}[\upshape (1)]}%
{\end{enumerate}}
\newcommand{\pup}[1]{\textup{(}{#1}\textup{)}}
\newcommand{\lrep}{$\ell$-rep\-re\-sentable}
\newcommand{\lrepy}{$\ell$-rep\-re\-sentabil\-ity}
\newcommand{\jh}{join-ho\-mo\-mor\-phism}
\newcommand{\eqdef}{\overset{\mathrm{def}}{=}}
\newcommand{\lgrp}{$\ell$-group}
\newcommand{\lhom}{$\ell$-ho\-mo\-mor\-phism}
\newcommand{\lidl}{$\ell$-ideal}
\newcommand{\kk}{\Bbbk}
\newcommand{\kkp}[1]{\kk^{(#1)}}
\DeclareMathOperator{\Spec}{Spec}
\DeclareMathOperator{\Specr}{Spec_{r}}
\newcommand{\FL}{\operatorname{F}_{\ell}}
\DeclareMathOperator{\conv}{conv}
\DeclareMathOperator{\cone}{cone}
\DeclareMathOperator{\card}{card}
\newcommand{\ga}{\alpha}
\newcommand{\gb}{\beta}
\newcommand{\gd}{\delta}
\newcommand{\gf}{\varphi}
\newcommand{\gi}{\iota}
\newcommand{\gl}{\lambda}
\newcommand{\gq}{\theta}
\newcommand{\gs}{\sigma}
\newcommand{\go}{\omega}
\newcommand{\eps}{\varepsilon}
\newcommand{\bck}[1]{[\![{#1}]\!]}
\newcommand{\ps}[1]{[{#1}]}
\newcommand{\gS}{\Sigma}
\newcommand{\sd}{\mathbin{\smallsetminus}}
\newcommand{\jz}{$(\vee,0)$}
\newcommand{\jzh}{\jz-ho\-mo\-mor\-phism}
\newcommand{\pI}[1]{\bigl({#1}\bigr)}
\newcommand{\set}[1]{\left\{#1\right\}}
\newcommand{\setm}[2]{\set{{#1}\mid{#2}}}
\newcommand{\vecm}[2]{\left({#1}\mid{#2}\right)}
\newcommand{\seq}[1]{\langle{#1}\rangle}
\newcommand{\es}{\varnothing}
\newcommand{\res}{\mathbin{\restriction}}
\newcommand{\QQ}{\mathbb{Q}}
\DeclareMathOperator{\Op}{Op}
\newcommand{\Ops}{\Op^-}
\DeclareMathOperator{\Id}{Id}
\DeclareMathOperator{\Idc}{Id_c}
\newcommand{\cA}{{\mathcal{A}}}
\newcommand{\cD}{{\mathcal{D}}}
\newcommand{\cK}{{\mathcal{K}}}
\newcommand{\cN}{{\mathcal{N}}}
\newcommand{\cO}{{\mathcal{O}}}
\newcommand{\cR}{{\mathcal{R}}}
\numberwithin{equation}{section}
\newtheorem*{stat}{\name}
\newcommand{\name}{testing}
\newenvironment{all}[1]{\renewcommand{\name}{#1}\begin{stat}}
                        {\end{stat}}
\theoremstyle{plain}
\newtheorem{theorem}{Theorem}[section]
\newtheorem{corollary}[theorem]{Corollary}
\newtheorem{lemma}[theorem]{Lemma}
\newtheorem{case}{Case}
\newtheorem*{sclaim}{Claim}
\theoremstyle{definition}
\newtheorem{definition}[theorem]{Definition}
\newtheorem{notation}[theorem]{Notation}
\newtheorem{problem}[theorem]{Problem}
\theoremstyle{remark}
\newtheorem{remark}[theorem]{Remark}
\newcommand{\qedc}{{\qed}~{\rm Claim~{\theclaim}.}}
\newcommand{\qedsc}{{\qed}~{\rm Claim.}}
\newenvironment{scproof}
{\begin{proof}[Proof of Claim.]}
{\qedsc\renewcommand{\qed}{}\end{proof}}
\numberwithin{figure}{section}
\numberwithin{table}{section}
\newcommand{\bc}{\boldsymbol{c}}
\newcommand{\bd}{\boldsymbol{d}}
\newcommand{\be}{\boldsymbol{e}}
\newcommand{\scL}{\mathbin{\mathscr{L}}}
\newcommand{\dlat}{distributive lattice}
\newcommand{\dzlat}{distributive $0$-lattice}
\newcommand{\cn}{completely normal}
\newcommand{\scp}[2]{({#1}\mid{#2})}
\title[The MV-spectrum Problem in cardinality aleph one]{A solution to the MV-spectrum Problem in size aleph one}
\author[M. Plo\v{s}\v{c}ica]{Miroslav Plo\v{s}\v{c}ica}
\address{Faculty of Natural Sciences\\
\v{S}af\'arik's University\\
Jesenn\'a 5\\
04154 Ko\v{s}ice\\
Slovakia}
\email{miroslav.ploscica@upjs.sk}
\urladdr{https://ploscica.science.upjs.sk}
\author[F. Wehrung]{Friedrich Wehrung}
\address{Normandie Universit\'e, UNICAEN\\
CNRS UMR 6139, LMNO\\
14000 Caen\\
France}
\email{friedrich.wehrung01@unicaen.fr}
\urladdr{https://wehrungf.users.lmno.cnrs.fr}
\date{\today}
\keywords{Lattice-ordered; Abelian; group; vector lattice; ideal; completely normal; distributive; lattice; countable; countably based differences; closed map; consonance; forest; spectrum}
\subjclass[2010]{06D05; 06D20; 06D35; 06F20; 52A05; 52B12; 52C35}
\begin{document}

\begin{abstract}
Denote by~$\Idc{G}$ the lattice of all principal \lidl{s} of an Abelian \lgrp~$G$.
Our main result is the following.
\begin{all}{Theorem}
For every countable Abelian \lgrp~$G$, every countable \cn\ \dzlat~$L$, and every closed $0$-lattice homomorphism $\gf\colon\Idc{G}\to L$, there are a countable Abelian \lgrp~$H$, an \lhom\ $f\colon G\to H$, and a lattice isomorphism $\gi\colon\Idc{H}\to L$ such that $\gf=\gi\circ\Idc{f}$.
\end{all}
We record the following consequences of that result:
\begin{enumerater}
\item\label{ArrLift}
A $0$-lattice homomorphism $\gf\colon K\to L$, between countable \cn\ \dzlat{s}, can be represented, with respect to the functor~$\Idc$, by an \lhom\ of Abelian \lgrp{s} if{f} it is closed.

\item\label{CBDChar}
A \dzlat~$D$ of cardinality at most~$\aleph_1$ is isomorphic to some~$\Idc{G}$ if{f}~$D$ is \cn\ and for all $a,b\in D$ the set $\setm{x\in D}{a\leq b\vee x}$ has a countable coinitial subset.
This solves Mun\-di\-ci's MV-spectrum Problem for cardinalities up to~$\aleph_1$.
The bound~$\aleph_1$ is sharp.
\end{enumerater}
Item~\eqref{ArrLift} is extended to commutative diagrams indexed by forests in which every node has countable height.
All our results are stated in terms of vector lattices over any countable totally ordered division ring.
\end{abstract}

\maketitle

%\tableofcontents

\section{Introduction}\label{S:Intro}
The set $\Spec{G}$ of all prime \lidl{s} in an Abelian lattice-ordered group (in short \emph{\lgrp})~$G$ with order-unit, endowed with the hull-kernel topology, is a spectral space (as defined in Hochster~\cite{Hoch1969}) called the \emph{spectrum of~$G$}.
The topological spaces~$\Spec{G}$ satisfy an additional property called \emph{complete normality}, which states that the specialization order is a root system.
The problem of characterizing all spaces~$\Spec{G}$ is stated in the following references:
\begin{itemize}
\item
In his 1973 paper, Mart{\'\i}nez \cite[Question~II]{Mart1973} asks (in an equivalent form) for a characterization of isomorphic copies of \lidl\ lattices of \emph{Archimedean} \lgrp{s}.

\item
In his 2011 monograph, Mundici \cite[Problem~2]{Mund2011} asks ``Which topological spaces are homeomorphic to $\Spec{A}$ for some MV-algebra~$A$''.
Since an MV-algebra and its associated Abelian \lgrp\ with unit (through Mundici's equivalence~\cite{Mund86}) have homeomorphic spectra, this is equivalent to the corresponding question about \lgrp{s} with order-unit.

\end{itemize}

Beginning in Mart{\'\i}nez' paper~\cite{Mart1973}, most known answers to those problems are stated in terms of the Stone dual of~$\Spec{G}$, which is the lattice~$\Idc{G}$ of all principal \lidl{s} of~$G$ (cf. Keimel~\cite{Keim1971}).
Such lattices will be called \emph{\lrep}.
They are both bounded and distributive.
Owing to Monteiro~\cite{Mont1954}, complete normality of the topological space~$\Spec{G}$ translates to a first-order lattice-theoretical property of~$\Idc{G}$, also called complete normality, namely
 \begin{equation*}
 (\forall a,b)(\exists x,y)(a\vee b=a\vee y=x\vee b
 \text{ and }x\wedge y=0)\,.
 \end{equation*}
Delzell and Madden's non-\lrep\ \cn\ lattice from~\cite{DelMad1994} has~$\aleph_1$ elements.
On the other hand, a \cn\ bounded distributive lattice~$D$ is \lrep\ provided either~$D$ is a dual Heyting algebra (cf.
Cignoli \emph{et al.}~\cite{CGL}, Iberkleid \emph{et al.}~\cite{IMM2011}) or~$D$ is countable%
\footnote{
Throughout the paper ``countable'' will mean ``at most countable''.
}
(cf. Wehrung~\cite{MV1}).

There are also negative results, whose spirit is ``\emph{\lrepy\  cannot be characterized}'' [in a logically simpler way than originally defined].
Lenzi and Di Nola observe in~\cite{LenDiN2020} that since the class of all \lrep\ lattices is not closed under ultraproducts, it is not the class of models of any existential second-order sentence.
In Wehrung~\cite{NonElt} it is proved, building on Wehrung~\cite{Ceva} and an extension of the condensate construction initiated in Gillibert and Wehrung~\cite{Larder}, that the class of all \lrep\ lattices is not the class of all models of any class of sentences in the infinitary language~$\scL_{\infty\gl}$ for fixed%
\footnote{
One cannot do better than fixing~$\gl$, because any class of models closed under isomorphic copy is the class of all models of its~$\scL_{\infty\infty}$ consequences.
}
~$\gl$.
The subsequent paper Wehrung~\cite{AccProj} establishes that the class of all \lrep\ lattices is not the class of models of any sentence of the form $(\forall X)\gf$ where~$X$ is a second-order variable with bounded arity and~$\gf$ is an~$\scL_{\infty\infty}$ formula (we say \emph{co-projective}\label{co-proj}).
Those non-rep\-re\-sentabil\-ity results extend to \emph{Archimedean} \lgrp{s}; however, it is still unknown whether they extend to Archimedean \lgrp{s} \emph{with order-unit} (cf. Problem~\ref{Pb:ArchUnit}).

Those negative results also apply to the larger class of all \emph{homomorphic images of \lrep\ lattices}, and in fact to any intermediary class.

A strong necessary condition for \lrepy\ was coined, under different names (``$\Id\go$'' and ``$\gs$-Conrad'', respectively), in Cignoli \emph{et al.}~\cite{CGL} and  Iberkleid \emph{et al.}~\cite{IMM2011}; it got renamed again in Wehrung~\cite{MV1} as follows.

\begin{definition}\label{D:CBD}
A lattice~$L$ has \emph{countably based differences} if for all $a,b\in L$ there exists a countable subset~$C$ of~$L$ such that for every $x\in L$, $a\leq b\vee x$ if{f} $c\leq x$ for some $c\in C$.
\end{definition}

Every \lrep\ lattice has countably based differences.
Delzell and Madden's counterexample from~\cite{DelMad1994} does not have countably based differences.
The main counterexample~$D$ in Wehrung~\cite{Ceva} has size~$\aleph_2$;
it is \cn, has countably based differences, but is not what is called there \emph{Cevian}.
Being Cevian is an existential second-order property.
Homomorphic images of \lrep\ lattices are always Cevian, thus~$D$ is not such a lattice.
Plo\v{s}\v{c}ica's non \lrep\ counterexample from~\cite{Plo21} is \cn, has countably based differences, and is Cevian; it has cardinality $(2^{\aleph_0})^+$.
Those results show that in some sense, \emph{characterizing \lrepy\ beyond the size~$\aleph_2$ is hopeless}.

In this note we deal with the remaining gap, that is, the cardinality~$\aleph_1$.
The main consequence, pertaining to the MV-spectrum Problem, of our work is the following extension, from countable to~$\aleph_1$, of the main result of Wehrung~\cite{MV1}, contained in Corollary~\ref{C:Main1}:

\begin{all}{Theorem}
A \dzlat\ of cardinality at most~$\aleph_1$ is \lrep\ if{f} it is \cn\ and has countably based differences.
\end{all}

\emph{Via} Stone duality, this result is a full solution of the MV-spectrum Problem for spaces with at most~$\aleph_1$ compact open subsets.
Due to the main counterexample from Wehrung~\cite{Ceva}, the bound~$\aleph_1$ in this result is sharp (e.g., the theorem above does not extend to the size~$\aleph_2$).

Our main result, Theorem~\ref{T:Main}, deals with countable structures only.
It also involves the concept of a \emph{closed homomorphism} between lattices;
a lattice homomorphism $f\colon K\to L$ is \emph{closed} if for all $a,b\in K$ and all $x\in L$, if $f(a)\leq f(b)\vee x$ then there exists $u\in K$ such that $a\leq b\vee u$ whereas $f(u)\leq x$.
In the context of Stone duality between distributive $0$-lattices with cofinal $0$-lattice homomorphisms and generalized spectral spaces with spectral maps (cf. Rump and Yang~\cite{RumYan2009}), it can be proved that a cofinal $0$-lattice homomorphism is closed if{f} its Stone dual is closed in the topological sense (i.e., it sends closed sets to closed sets); see Iberkleid \emph{et al.} \cite[Lemma~1.3.5]{IMM2011}.

For any \lhom\ $f\colon G\to H$ between Abelian \lgrp{s}, the lattice homomorphism $\Idc{f}\colon\Idc{G}\to\Idc{H}$ is closed (cf. Wehrung \cite[Proposition~2.6]{MV1}).
Our main result implies that conversely, \emph{every closed homomorphism between countable \cn\ \dzlat{s} can be represented in this way}.
In fact, that result extends to commutative diagrams indexed by forests in which every node has countable height (cf. Theorem~\ref{T:Main+}).
Our main result, illustrated on Figure~\ref{Fig:MainStat}, states as follows:

\begin{figure}[htb]
\begin{tikzcd}[row sep=tiny]
\centering
\Idc{G}\arrow[dd,"\gf"',"\text{closed}"] && G\arrow[dd,"f"] &
\Idc{G}\arrow[dd,"\Idc{f}"']\arrow[ddr,"\gf"] &\\
& \scalebox{2}{\ensuremath\rightsquigarrow} &&&\\
L && H & \Idc{H}\arrow[r,"\gi","\cong"'] & L\end{tikzcd}
\caption{Illustrating Theorem~\ref{T:Main}}
\label{Fig:MainStat}
\end{figure}

\begin{all}{Theorem~\ref{T:Main} (restated for \lgrp{s})}
Let~$G$ be a countable Abelian \lgrp, let~$L$ be a countable \cn\ \dzlat, and let $\gf\colon\Idc{G}\to L$ be a closed $0$-lattice homomorphism.
Then there are a countable Abelian \lgrp~$H$, an \lhom\ $f\colon G\to H$, and a lattice isomorphism $\gi\colon\Idc{H}\to L$ such that $\gf=\gi\circ\Idc{f}$.
\end{all}

All our results will be formulated in the context of left vector lattices over \emph{countable} totally ordered division rings~$\kk$ (due to Wehrung \cite[\S~9]{RAlg}, the countability assumption on~$\kk$ cannot be dropped).
The canonical embedding from any Abelian \lgrp~$G$ into its divisible hull~$\QQ\otimes G$ is, following the terminology of Anderson and Feil~\cite{AnFe}, an Archimedean extension, so $\Idc{G}\cong\Idc(\QQ\otimes G)$.
Conversely, any vector lattice~$E$ over~$\QQ$ (or, more generally, over an Archimedean totally ordered field) has the same \lidl{s} as an \lgrp\ as a vector lattice; hence \emph{the vector lattice results entail the \lgrp\ results}.

\section{Strategy of the proof}\label{S:Strategy}
Given a countable \cn\ \dzlat~$L$, a countable left vector lattice~$E$ over a countable totally ordered division ring~$\kk$, we wish to prove that every closed $0$-lattice homomorphism $\gf\colon\Idc{E}\to L$ is, up to isomorphism (over~$\Idc{E}$), $\Idc{f}$ for some vector lattice homomorphism $f\colon E\to F$.
The initial step consists of observing that~$E$ is a homomorphic image of the free $\kk$-vector lattice~$\FL(I,\kk)$ over any countably infinite set~$I$, thus reducing the problem to the case where $E=\FL(I,\kk)$ is a free vector lattice.
Then, owing to the Baker-Bernau-Madden duality for free vector lattices (cf. Lemma~\ref{L:BakBey}), $\Idc\FL(I,\kk)$ is isomorphic to the sublattice~$\Ops\kkp{I}$ of the powerset algebra of the free left $\kk$-vector space~$\kkp{I}$ generated by all open half-spaces defined by elements of~$\kkp{I}$.

The original closed $0$-lattice homomorphism $\gf\colon\Ops\kkp{I}\to L$ is then inductively enlarged to a chain of closed homomorphisms $\gf_m\colon\Ops\kkp{I_m}\to L$, with each $I_m\eqdef I\sqcup\set{0,\dots,m-1}$ (where~$\sqcup$ denotes disjoint union).
Each map~$\gf_m$ is in turn constructed as the common extension of an ascending chain $\vecm{\gf_{m,n}}{n<\go}$ of maps $\gf_{m,n}\colon\Ops(\kkp{I_m}\cup\cD_{m,n})\to L$ for finite sets $\cD_{m,n}\subset\kkp{I_{m+1}}$.
Setting $\cD_{m,0}\eqdef\set{\gd_m}$, where~$\gd_m$ denotes the $m$th coordinate projection $\kkp{I_m}\to\kk$, the value of~$\gf_{m,0}$ at the positive open half-space defined by~$\gd_m$ is initially set at the $m$th element of~$L$ (with respect to a given enumeration).
The common extension $\psi\colon\Ops\kkp{I\sqcup\go}\to L$ of the~$\gf_m$ will then be a closed, surjective homomorphism; this easily entails the desired solution.

The construction of the~$\gf_{m,n}$ follows, to some extent, the strategy initiated for the countable case in Wehrung~\cite{MV1}.
According to the parity of~$n$, the step from~$n$ to~$n+1$ may either enlarge~$\cD_{m,n}$ by one element (``Domain Step'') or solve a ``closedness problem'' $\gf_{m,n}\ps{a}\leq\gf_{m,n}\ps{b}\vee\be$ (``Closure Step''; $\ps{a}$ denotes the positive open half-space associated to~$a$, see Notation~\ref{Not:ps1}).
The Domain Step relies on its original version, stated for finite sets of hyperplanes, stated in Wehrung \cite[Lemma~6.6]{MV1}.

However, the original approach to the Closure Step, in the form of Claim~1 from the proof of Wehrung \cite[Lemma~7.1]{MV1}, can be proved to fail in this more general context; so the argument of Plo\v{s}\v{c}ica and Wehrung~\cite{MV2} is not sufficient to go from ``homomorphic image'' to ``isomorphic copy''.
(This should be no surprise, because closed homomorphisms play no role in~\cite{MV2}.)
It turns out that the Closure Step requires a far more subtle approach.

Our first technical step, handled in Section~\ref{S:PreOOcD}, consists of stating, for a finite subset~$\cD\cup\set{c}$ of~$\kkp{J}$ where $J=I\sqcup\set{o}$, a criterion ensuring that a $0$-lattice homomorphism $\Ops(\kkp{I}\cup\cD)\to L$ can be extended to a lattice homomorphism $\Ops(\kkp{I}\cup\cD\cup\set{c})\to L$ sending the pair $(\ps{c},\ps{-c})$ to a given pair $(\bc^+,\bc^-)$ of elements in~$L$.
This criterion (cf. Lemma~\ref{L:ExtIcupD}) is stated in terms of the system of inequalities \eqref{Eq:c+-=0}--\eqref{Eq:LB2a-}, that should be satisfied by~$(\bc^+,\bc^-)$.

Our second technical step, aimed at handling a given closedness problem\newline $\gf_{m,n}\ps{a}\leq\gf_{m,n}\ps{b}\vee\be$, consists of taking advantage of $\gf_m\colon\Ops\kkp{I_m}\to L$ be closed, leading to the satisfiability of \eqref{Eq:c+-=0}--\eqref{Eq:LB2a-} with $\cD\eqdef\cD_{m,n}$ and $c\eqdef a-\gl b$ for large enough~$\gl\in\kk$, together with the additional inequality $\bc^+\leq\gf_{m,n}\ps{-b}\vee\be$.
This yields a solution to our closedness problem at the next step~$\gf_{m,n+1}$, and thus the desired conclusion (cf. Section~\ref{S:ClosStep}).

\section{Basic concepts and terminology}\label{S:Basic}

A subset~$C$ in a poset~$P$ is \emph{coinitial} if every element of~$P$ lies above some element of~$C$.
We say that~$P$ is a \emph{forest} if every principal ideal of~$P$ is well-ordered under the induced order.

For any structure~$G$ with a partially ordered Abelian group reduct (e.g., an ordered vector space, an ordered field, and so on), we set $G^+\eqdef\setm{x\in G}{0\leq x}$.
We set $x^+\eqdef x\vee 0$ and $|x|\eqdef x\vee(-x)$ if those elements exist, whenever $x\in G$.

For a totally ordered division ring~$\kk$, a \emph{$\kk$-vector lattice} is a left $\kk$-vector space~$E$ endowed with a translation-invariant lattice structure such that $\kk^+E^+\subseteq E^+$.
A map $f\colon E\to F$, between vector lattices, is a \emph{$\kk$,\lhom} if it is both a vector space homomorphism and a lattice homomorphism.
An \emph{\lidl} of~$E$ is a vector subspace of~$E$ closed under the lattice operations.
Principal (equivalently, finitely generated) \lidl{s} are the subsets of the form
$\seq{a}\eqdef\setm{x\in E}{(\exists\gl\in\kk^+)(|x|\leq\gl|a|)}$ for $a\in E$.
For any $a,b\in E^+$, $\seq{a}\vee\seq{b}=\seq{a\vee b}=\seq{a+b}$ whereas $\seq{a}\wedge\seq{b}=\seq{a\wedge b}$.
The lattice~$\Id{E}$ of all \lidl{s} of~$E$ is a distributive algebraic lattice, of which its semilattice of compact elements, $\Idc{E}\eqdef\setm{\seq{x}}{x\in E^+}$, is a $0$-sublattice.
Any $\kk$,\lhom\ $f\colon E\to F$ gives rise to a $0$-lattice homomorphism $\Idc{f}\colon\Idc{E}\to\Idc{F}$, $\seq{x}_E\mapsto\seq{f(x)}_F$.
The assignment $E\mapsto\Idc{E}$, $f\mapsto\Idc{f}$ is a functor.
This functor preserves all directed colimits.

\section{Consonance and closed homomorphisms}\label{S:ClHom}

The following two definitions originate in Wehrung~\cite{MV1} and subsequent papers, where they are widely used.
We recall them for convenience.

\begin{definition}\label{D:Closedat}
Let~$K$ and~$L$ be lattices and let $f\colon K\to L$ be a lattice homomorphism.
We say that~$f$ is \emph{closed at} a pair $(a,b)\in K\times K$ if for all $x\in L$ such that $f(a)\leq f(b)\vee x$ there exists $u\in K$ such that $a\leq b\vee u$ and $f(u)\leq x$.
\end{definition}

In particular, $f$ is closed if{f} it is closed at every pair in $K\times K$.

\begin{definition}\label{D:Cons}
A pair $(a,b)$ of elements in a \dzlat~$L$ is \emph{consonant} in~$L$ if there is a pair $(u,v)\in L\times L$ \pup{then called a \emph{splitting pair} for~$(a,b)$} such that $a\vee b=a\vee v=u\vee b$ whereas $u\wedge v=0$; or, equivalently, $a=(a\wedge b)\vee u$, $b=(a\wedge b)\vee v$, and $u\wedge v=0$.
A subset~$X$ of~$L$ is consonant in~$L$ if every pair of elements in~$X$ is consonant in~$L$.
\end{definition}

In particular, $L$ is \cn\ if{f} every pair in $L\times L$ is consonant.
The following lemma is a more user-friendly version of Wehrung \cite[Lemma~3.7]{MV1}, with a similar proof.

\begin{lemma}\label{L:Closat}
Let~$K$ and~$L$ be \dlat{s}, such that~$L$ has a zero, let $f\colon K\to L$ be a lattice homomorphism, and let~$a$, $a_1$, $a_2$, $b$, $b_1$, $b_2$ be elements of~$K$.
The following statements hold:
\begin{enumerater}
\item\label{j0Clos}
If~$f$ is closed at both~$(a_1,b)$ and~$(a_2,b)$, then it is closed at $(a_1\vee a_2,b)$.

\item\label{0mClos}
If~$f$ is closed at both~$(a,b_1)$ and~$(a,b_2)$, then it is closed at $(a,b_1\wedge b_2)$.

\item\label{m0Clos}
If~$(f(a_1),f(a_2))$ is consonant in~$L$ and~$f$ is closed at both~$(a_1,b)$ and~$(a_2,b)$, then~$f$ is closed at $(a_1\wedge a_2,b)$.

\item\label{0jClos}
If~$(f(b_1),f(b_2))$ is consonant in~$L$ and~$f$ is closed at both~$(a,b_1)$ and~$(a,b_2)$, then~$f$ is closed at $(a,b_1\vee b_2)$.
\end{enumerater}
\end{lemma}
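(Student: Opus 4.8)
The plan is to treat all four parts by a single template: start from the defining inequality for closedness at the target pair, split it—using distributivity of~$L$ and, in parts (3) and~(4), a splitting pair supplied by consonance—into inequalities to which the hypothesised closedness applies, and then recombine the resulting witnesses by one lattice operation, verifying the two required conditions with the help of distributivity of~$K$. The recurring point is that a \emph{join} of witnesses handles the cases where the target moves by a join on the left or a meet on the right of~$\leq$, whereas a \emph{meet} of witnesses, together with a splitting pair used to annihilate the extra terms, handles the other two cases.

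Parts (1) and (2) are the easy, mutually dual cases and serve as a warm-up. For~(1), from $f(a_1\vee a_2)=f(a_1)\vee f(a_2)\leq f(b)\vee x$ I read off $f(a_i)\leq f(b)\vee x$ for $i\in\set{1,2}$, apply closedness at each $(a_i,b)$ to get $u_i$ with $a_i\leq b\vee u_i$ and $f(u_i)\leq x$, and set $u\eqdef u_1\vee u_2$; then $a_1\vee a_2\leq b\vee u$ and $f(u)\leq x$. For~(2), I first use distributivity of~$L$ to rewrite $f(b_1\wedge b_2)\vee x=(f(b_1)\vee x)\wedge(f(b_2)\vee x)$, so that $f(a)\leq f(b_i)\vee x$ for each~$i$; closedness yields $u_i$ with $a\leq b_i\vee u_i$ and $f(u_i)\leq x$, and again $u\eqdef u_1\vee u_2$ works, the condition $a\leq(b_1\wedge b_2)\vee u$ now coming from distributivity in~$K$.

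The substantial content is in~(3) and~(4), where consonance enters; I expect the main obstacle to be recognising that the correct recombination is now a \emph{meet} rather than a join, together with the right way to exploit the disjointness $p\wedge q=0$ of the splitting pair. For~(3), let $(p,q)$ split $(f(a_1),f(a_2))$, so that $f(a_1)=(f(a_1)\wedge f(a_2))\vee p$ and $f(a_2)=(f(a_1)\wedge f(a_2))\vee q$ with $p\wedge q=0$. From the hypothesis $f(a_1)\wedge f(a_2)\leq f(b)\vee x$ I obtain $f(a_1)\leq f(b)\vee(x\vee p)$ and $f(a_2)\leq f(b)\vee(x\vee q)$; applying closedness at $(a_1,b)$ and $(a_2,b)$ with these enlarged targets gives $u_1,u_2$ with $a_i\leq b\vee u_i$, and $f(u_1)\leq x\vee p$, $f(u_2)\leq x\vee q$. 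Setting $u\eqdef u_1\wedge u_2$, distributivity in~$K$ gives $a_1\wedge a_2\leq(b\vee u_1)\wedge(b\vee u_2)=b\vee u$, while distributivity in~$L$ together with $p\wedge q=0$ gives $f(u)=f(u_1)\wedge f(u_2)\leq(x\vee p)\wedge(x\vee q)=x\vee(p\wedge q)=x$.

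Part~(4) is handled symmetrically. Letting $(p,q)$ split $(f(b_1),f(b_2))$, I use the identities $f(b_1)\vee f(b_2)=f(b_1)\vee q=p\vee f(b_2)$ to rewrite the hypothesis $f(a)\leq f(b_1)\vee f(b_2)\vee x$ both as $f(a)\leq f(b_1)\vee(q\vee x)$ and as $f(a)\leq f(b_2)\vee(p\vee x)$; closedness at $(a,b_1)$ and $(a,b_2)$ then yields $u_1,u_2$ with $a\leq b_1\vee u_1$, $a\leq b_2\vee u_2$, $f(u_1)\leq q\vee x$, $f(u_2)\leq p\vee x$. Again $u\eqdef u_1\wedge u_2$ is the right choice: expanding $(b_1\vee u_1)\wedge(b_2\vee u_2)$ by distributivity in~$K$ and bounding the cross terms $b_1\wedge b_2$, $b_1\wedge u_2$, $u_1\wedge b_2$ each by $b_1\vee b_2$ shows $a\leq(b_1\vee b_2)\vee u$, while $f(u)\leq(q\vee x)\wedge(p\vee x)=(p\wedge q)\vee x=x$ exactly as before. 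I expect the only delicate points throughout to be the bookkeeping of which witness-combination and which form of the splitting-pair identity to use in each case; the computations themselves are routine applications of distributivity.
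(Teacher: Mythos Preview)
Your proof is correct and follows essentially the same approach as the paper's: parts~(1) and~(2) are handled exactly as the paper's ``trivial'' cases would be, and your argument for~(3) coincides with the paper's (your splitting pair $(p,q)$ is the paper's $(y_1,y_2)$, and the witness $u=u_1\wedge u_2$ is the paper's $c=c_1\wedge c_2$). The paper merely asserts that~(4) is ``similar to~(3)'', whereas you spell out the details; those details are correct.
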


\begin{proof}
\emph{Ad}~\eqref{j0Clos} and~\eqref{0mClos} are both trivial.

\emph{Ad}~\eqref{m0Clos}.
Fix a splitting pair $(y_1,y_2)$ for $(f(a_1),f(a_2))$ in~$L$ and let $x\in L$ such that $f(a_1\wedge a_2)\leq f(b)\vee x$.
Then each $f(a_i)=f(a_1\wedge a_2)\vee y_i\leq f(b)\vee x\vee y_i$, so, since~$f$ is closed at~$(a_i,b)$, there exists $c_i\in K$ such that $a_i\leq b\vee c_i$ and $f(c_i)\leq x\vee y_i$.
Setting $c\eqdef c_1\wedge c_2$, we get $a_1\wedge a_2\leq b\vee c$ whereas~$f(c)$ lies below each~$x\vee y_i$, thus below $(x\vee y_1)\wedge(x\vee y_2)=x$.

The proof of~\eqref{0jClos} is similar to the one of~\eqref{m0Clos}.
\end{proof}

The following lemma stems from Wehrung \cite[Lemma~3.9]{MV1}, with a similar proof.

\begin{lemma}\label{L:PropagClos}
Let~$K$ and~$L$ be \dlat{s} such that~$L$ has a zero, let~$\gS$ be a generating subset of~$K$, and let $f\colon K\to L$ be a lattice homomorphism such that~$f[K]$ is consonant in~$L$.
If~$f$ is closed at every pair in $\gS\times\gS$, then it is closed.
\end{lemma}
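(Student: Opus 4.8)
The plan is to propagate closedness from pairs in $\Sigma\times\Sigma$ to arbitrary pairs by a two-stage ``sublattice'' argument built on Lemma~\ref{L:Closat}. The crucial simplification is that, since $f[K]$ is consonant, the consonance hypotheses appearing in Lemma~\ref{L:Closat}\eqref{m0Clos} and Lemma~\ref{L:Closat}\eqref{0jClos} are automatically satisfied for \emph{every} choice of elements of~$K$. I would also use the defining feature of a generating set: any subset of~$K$ that contains~$\Sigma$ and is closed under both~$\vee$ and~$\wedge$ is a sublattice containing~$\Sigma$, hence is all of~$K$.

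First I would fix $b\in\Sigma$ and let $A_b\eqdef\setm{a\in K}{f\text{ is closed at }(a,b)}$. By hypothesis $\Sigma\subseteq A_b$. Closedness at $(a_1,b)$ and $(a_2,b)$ yields closedness at $(a_1\vee a_2,b)$ by Lemma~\ref{L:Closat}\eqref{j0Clos}, and at $(a_1\wedge a_2,b)$ by Lemma~\ref{L:Closat}\eqref{m0Clos}, the needed consonance of $(f(a_1),f(a_2))$ coming for free from consonance of $f[K]$. Thus $A_b$ is a sublattice of~$K$ containing~$\Sigma$, so $A_b=K$; that is, $f$ is closed at $(a,b)$ for every $a\in K$ and every $b\in\Sigma$.

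Next I would fix $a\in K$ and let $B_a\eqdef\setm{b\in K}{f\text{ is closed at }(a,b)}$. The first stage shows $\Sigma\subseteq B_a$. Now Lemma~\ref{L:Closat}\eqref{0mClos} gives closure of~$B_a$ under~$\wedge$, and Lemma~\ref{L:Closat}\eqref{0jClos} (again using consonance of $f[K]$) gives closure under~$\vee$. Hence $B_a$ is a sublattice containing~$\Sigma$, so $B_a=K$. As $a$ was arbitrary, $f$ is closed at every pair of~$K\times K$, i.e.\ $f$ is closed.

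I do not anticipate a genuine obstacle here: once Lemma~\ref{L:Closat} is granted, the whole content is bookkeeping. The one point requiring care is the asymmetry of the two coordinates---joins in the first coordinate and meets in the second propagate unconditionally, whereas meets in the first and joins in the second need consonance---which is precisely why the global consonance of $f[K]$ is invoked, and why the induction must be carried out in two separate stages rather than all at once.
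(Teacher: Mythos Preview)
Your proposal is correct and follows essentially the same two-stage sublattice argument as the paper, invoking Lemma~\ref{L:Closat} together with the global consonance of~$f[K]$. The only cosmetic difference is that the paper propagates first in the second coordinate (fixing $a\in\gS$) and then in the first (fixing $b\in K$), whereas you do the two stages in the opposite order; this is immaterial.
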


\begin{proof}
Owing to Lemma~\ref{L:Closat}, for every $a\in\gS$, the set
 \[
 X_a\eqdef\setm{b\in K}{f\text{ is closed at }(a,b)}
 \]
is a sublattice of~$K$ containing~$\gS$; thus $X_a=K$, that is, $f$ is closed at every pair in $\gS\times K$.
By the same token, it follows that for every $b\in K$, the set
 \[
 Y_b\eqdef\setm{a\in K}{f\text{ is closed at }(a,b)}
 \]
is a sublattice of~$K$ containing~$\gS$, thus equal to~$K$.
\end{proof}

\section{The lattices~$\Ops\cD$}
\label{S:OpsF}

In this section we state some required facts about the $\Ops\cD$ construction introduced in Wehrung~\cite{MV1} for the case of linear functionals and $\kk=\QQ$, and extended to arbitrary~$\kk$ and affine functionals in Wehrung~\cite{RAlg}.
Throughout this section we fix a totally ordered division ring~$\kk$.
Further, for any set~$I$ we denote by $\vecm{\gd_i}{i\in I}$ the canonical basis of the left $\kk$-vector space~$\kkp{I}$, and for $I\subseteq J$ we identify~$\kkp{I}$ with its canonical copy in~$\kkp{J}$.

\begin{notation}\label{Not:ps1}
Let~$I$ be a set and let $a\in\kkp{I}$.
We set
 \begin{equation*}
 \ps{a}\eqdef\setm{x\in\kkp{I}}{\scp{a}{x}>0}\,, \end{equation*}
whenever $a\in\kkp{I}$ (where $(a\mid x)\eqdef\sum_{i\in I}a_ix_i$).
Following Wehrung \cite{MV1,RAlg}, for any $\cD\subseteq\kkp{I}$, we shall denote by~$\Ops\cD$ the $0$-sublattice of the powerset of~$\kkp{I}$ generated by $\setm{\ps{x}}{x\in\cD}$.
\end{notation}

For a subset~$I$ of a set~$J$, we will occasionally identify $\Ops\kkp{I}$ with its canonical image in~$\Ops\kkp{J}$ (denoted $\Ops(\kkp{I},\kkp{J})$ in Plo\v{s}\v{c}ica and Wehrung \cite{MV2}).

Notation~\ref{Not:ps1} relates to the one from earlier papers such as \cite{MV2,MV1,RAlg} \emph{via} the rule $\ps{a}=\bck{a>0}$.
Our choice of the simpler notation~$\ps{a}$ is motivated by both our focus on open sets and the complexity of the equations and inequalities intervening in Sections~\ref{S:PreOOcD} and~\ref{S:ClosStep}.

We will be constantly using the obvious properties of the assignment $x\mapsto\ps{x}$ stated in the following lemma.

\begin{lemma}\label{L:Eltaryps}
The following statements hold, for all $x,y\in\kkp{I}$ and all $\gl\in\kk^+$:
\begin{enumerater}
\item\label{pscap}
$\ps{x}\cap\ps{y}\subseteq\ps{x+y}\subseteq
\ps{x}\cup\ps{y}$.

\item\label{ps0}
$\ps{x}\cap\ps{-x}=\es$.

\item\label{pshomog}
$\ps{\gl x}\subseteq\ps{x}$.
\end{enumerater}
\end{lemma}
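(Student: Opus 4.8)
The plan is to reduce all three statements to elementary order properties of the pairing $\scp{a}{z}=\sum_{i\in I}a_iz_i$, exploiting that it is additive in its first argument and that $\kk$, being a totally ordered division ring, obeys the usual sign rules: a sum of two positive elements is positive, and left multiplication by a strictly positive scalar both preserves and reflects strict positivity. First I would record the two facts I will use repeatedly, each immediate from the definition together with left distributivity in~$\kk$: additivity, $\scp{x+y}{z}=\scp{x}{z}+\scp{y}{z}$, and left homogeneity, $\scp{\gl x}{z}=\gl\scp{x}{z}$ for every $\gl\in\kk$ (indeed $(\gl x)_i=\gl x_i$, so $\sum_i\gl x_iz_i=\gl\sum_ix_iz_i$).

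For~\eqref{pscap}, to get $\ps{x}\cap\ps{y}\subseteq\ps{x+y}$ I take $z$ with $\scp{x}{z}>0$ and $\scp{y}{z}>0$ and conclude $\scp{x+y}{z}=\scp{x}{z}+\scp{y}{z}>0$, using compatibility of the order on~$\kk$ with addition. For the reverse inclusion $\ps{x+y}\subseteq\ps{x}\cup\ps{y}$ I argue contrapositively: if $z\notin\ps{x}\cup\ps{y}$ then $\scp{x}{z}\leq0$ and $\scp{y}{z}\leq0$, whence $\scp{x+y}{z}\leq0$ and $z\notin\ps{x+y}$. Statement~\eqref{ps0} is the same computation read once more: if $\scp{x}{z}>0$ then $\scp{-x}{z}=-\scp{x}{z}<0$, so no $z$ can lie in both $\ps{x}$ and $\ps{-x}$, giving $\ps{x}\cap\ps{-x}=\es$.

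For~\eqref{pshomog}, I take $z\in\ps{\gl x}$, so that $0<\scp{\gl x}{z}=\gl\scp{x}{z}$. Here the one point demanding care is the noncommutativity of~$\kk$: since $\gl\in\kk^+$ and $\gl\scp{x}{z}>0$ forces $\gl\neq0$, hence $\gl>0$, I multiply on the \emph{left} by $\gl^{-1}>0$ to obtain $\scp{x}{z}=\gl^{-1}\bigl(\gl\scp{x}{z}\bigr)>0$, that is, $z\in\ps{x}$. I expect no genuine obstacle in this lemma; the only thing to watch throughout is to perform each sign manipulation on the correct side of the product and to invoke the order axioms of a totally ordered division ring rather than those of an ordered field.
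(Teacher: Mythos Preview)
Your proof is correct and is precisely the direct verification from the definition of~$\ps{a}$ and the order axioms of a totally ordered division ring. The paper itself does not give a proof of this lemma: it simply calls these ``obvious properties of the assignment $x\mapsto\ps{x}$'' and states them without argument, so your write-up supplies exactly the routine check the paper omits.
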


For any subset~$C$ of~$\kkp{I}$, we denote by~$\conv(C)$ the convex hull of~$C$ and by $\cone(C)$ the additive submonoid of~$\kkp{I}$ generated by~$\kk^+C$ (so $0\in\cone(C)$).
The following lemma is implicit in Lemma~\ref{L:BakBey}.
Although it is certainly well known we could not find any reference stating it explicitly.
It can also be used to verify that \eqref{pscap}--\eqref{pshomog} from Lemma~\ref{L:Eltaryps} are, actually, defining relations of $\Ops\kkp{I}$.

\begin{lemma}\label{L:mmleqjjcoz}
Let~$A$ and~$B$ be finite subsets of~$\kkp{I}$.
The following are equivalent:
\begin{enumeratei}
\item\label{mminuu}
$\bigcap_{a\in A}\ps{a}\subseteq\bigcup_{b\in B}\ps{b}$;

\item\label{convmmcone1}
$\conv(A)\cap\cone(B)\neq\es$;

\item\label{convmmcone2}
there are elements $\xi_a,\eta_b\in\kk^+$, for $a\in A$ and $b\in B$, such that 
$\sum_{a\in A}\xi_aa=\sum_{b\in B}\eta_bb$ whereas at least one~$\xi_a$ is nonzero.
\end{enumeratei}
\end{lemma}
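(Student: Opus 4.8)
The plan is to prove the cycle \eqref{convmmcone2}$\Leftrightarrow$\eqref{convmmcone1}$\Rightarrow$\eqref{mminuu}$\Rightarrow$\eqref{convmmcone2}, of which only the last implication is substantial. For \eqref{convmmcone1}$\Leftrightarrow$\eqref{convmmcone2}, recall that $\conv(A)$ consists of the $\sum_{a\in A}\xi_a a$ with $\xi_a\in\kk^+$ and $\sum_a\xi_a=1$, while $\cone(B)$ consists of the $\sum_{b\in B}\eta_b b$ with $\eta_b\in\kk^+$. Thus \eqref{convmmcone1} furnishes such coefficients with $\sum_a\xi_a a=\sum_b\eta_b b$ and $\sum_a\xi_a=1$, in particular with some $\xi_a\neq0$, which is \eqref{convmmcone2}. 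Conversely, given \eqref{convmmcone2}, set $s\eqdef\sum_a\xi_a$; since the $\xi_a$ lie in~$\kk^+$ and one is nonzero, $s>0$, and left-multiplying the relation by $s^{-1}\in\kk^+$ gives $\sum_a(s^{-1}\xi_a)a=\sum_b(s^{-1}\eta_b)b$ with $\sum_a s^{-1}\xi_a=1$, so the common value lies in $\conv(A)\cap\cone(B)$. (The degenerate instances are absorbed by the conventions $\conv(\es)=\es$ and $\cone(\es)=\set{0}$.)

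For \eqref{convmmcone1}$\Rightarrow$\eqref{mminuu}, fix $\xi_a,\eta_b\in\kk^+$ with $\sum_a\xi_a=1$ and $\sum_a\xi_a a=\sum_b\eta_b b$, and let $x\in\bigcap_{a\in A}\ps{a}$, so $\scp{a}{x}>0$ for all~$a\in A$. As $y\mapsto\scp{y}{x}$ is left-$\kk$-linear, $\scp{\sum_a\xi_a a}{x}=\sum_a\xi_a\scp{a}{x}>0$, since $\sum_a\xi_a=1$ forces some $\xi_a>0$ while every term is nonnegative. Using $\sum_a\xi_a a=\sum_b\eta_b b$ we get $\sum_b\eta_b\scp{b}{x}>0$, whence $\eta_b\scp{b}{x}>0$ for some~$b$; as $\eta_b\in\kk^+$ this forces $\scp{b}{x}>0$, i.e. $x\in\ps{b}$. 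Hence $\bigcap_{a\in A}\ps{a}\subseteq\bigcup_{b\in B}\ps{b}$.

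It remains to prove \eqref{mminuu}$\Rightarrow$\eqref{convmmcone2}, for which I would argue the contrapositive: assuming \eqref{convmmcone2} fails, produce $x\in\kkp{I}$ with $\scp{a}{x}>0$ for all $a\in A$ and $\scp{b}{x}\leq0$ for all $b\in B$; such an~$x$ lies in $\bigcap_{a\in A}\ps{a}$ but in no $\ps{b}$, refuting \eqref{mminuu}. Only the finitely many coordinates appearing in the supports of the elements of $A\cup B$ matter, so this is a finite system~(S) of strict and weak homogeneous linear inequalities in finitely many $\kk$-valued unknowns. That~(S) is solvable unless a contradiction is derivable from it is precisely the Motzkin transposition theorem, which I would establish by Fourier–Motzkin elimination of the unknowns one at a time: each step replaces the inequalities involving the current variable by left-nonnegative combinations of oppositely-signed pairs, operations under which the total order and the weak/strict bookkeeping are preserved. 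If~(S) is infeasible, the elimination assembles a left-nonnegative combination $\sum_a\xi_a\scp{a}{x}+\sum_b\eta_b\scp{-b}{x}$ equal to the zero functional yet carrying a strict relation; vanishing of the functional means $\sum_a\xi_a a=\sum_b\eta_b b$ (the pairing being nondegenerate: test against the $\gd_i$), while strictness forces $\sum_a\xi_a>0$, so some $\xi_a\neq0$. This is exactly \eqref{convmmcone2}.

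The main obstacle is precisely this last implication: one must verify that the classical theorem of the alternative survives the passage to a possibly \emph{noncommutative} ordered division ring. The two delicate points are that each elimination multiplies an inequality on the \emph{left} by a positive scalar (or its inverse), so that the certificate coefficients $\xi_a,\eta_b$ emerge on the left and, by left-linearity of $y\mapsto\scp{y}{x}$, recombine into the genuine vector identity $\sum_a\xi_a a=\sum_b\eta_b b$; and that the strictness of at least one surviving $A$-inequality is propagated, guaranteeing $\sum_a\xi_a>0$. Equivalently, one may run the argument geometrically: failure of \eqref{convmmcone2} means the polytope $\conv(A)$ and the finitely generated cone $\cone(B)$ are disjoint, and a strict separation of these two closed convex sets—valid over any ordered division ring—yields the desired~$x$, once one notes that every $\kk$-linear functional on $\operatorname{span}_{\kk}(A\cup B)$ has the form $y\mapsto\scp{y}{x}$ for some $x\in\kkp{I}$.
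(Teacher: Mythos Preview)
Your proposal is correct and follows the same overall architecture as the paper: the easy implications \eqref{convmmcone1}$\Leftrightarrow$\eqref{convmmcone2} and \eqref{convmmcone2}$\Rightarrow$\eqref{mminuu} are handled identically, and the substantial implication is argued by contrapositive, producing a separating~$x$.

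The only difference lies in how that separating point is obtained. The paper observes that $\conv(A)+(-\cone(B))$ is a convex polyhedron (citing the Decomposition Theorem, Schrijver \cite[Cor.~7.1.b]{Schrij1986}, with the remark that its proof is valid over any totally ordered division ring), and then separates~$0$ from it by a single closed half-space $\scp{{\cdot}}{x}\geq1$; the inequalities $\scp{a}{x}>0$ and $\scp{b}{x}\leq0$ follow by specializing $c=a$ and $c=a-\gl b$ with~$\gl$ large. You instead propose to run Fourier--Motzkin elimination (equivalently, Motzkin's transposition theorem) directly on the system, or to separate $\conv(A)$ from $\cone(B)$. Both routes are standard and ultimately rest on the same linear algebra; your version is more self-contained, while the paper's is shorter by outsourcing the work to a reference. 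Your attention to left-multiplication and the preservation of strictness is exactly what is needed for the noncommutative case, and matches the paper's footnote that the cited argument ``involves only [semi]linear algebra''.
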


\begin{proof}
We may assume that~$I$ is finite.
Also, the result is trivial in case $A=\es$ (for $\conv(\es)=\es$ and, thanks to the zero element, the empty intersection is never contained in $\bigcup_{b\in B}\ps{b}$), so we may assume that $A\neq\es$.

\eqref{convmmcone2}$\Rightarrow$\eqref{mminuu}.
Owing to $\sum\nolimits_{a\in A}\xi_a>0$, for any $x\in\bigcap_{a\in A}\ps{a}$,
 \[
 0<\pI{\sum\nolimits_{a\in A}\xi_a}\min_{a\in A}\scp{a}{x}
 \leq\sum\nolimits_{a\in A}\xi_a\scp{a}{x}=
 \sum\nolimits_{b\in B}\eta_b\scp{b}{x}\,,
 \]
which, since each $\eta_b\geq0$, entails that $x\in\bigcup_{b\in B}\ps{b}$.

\eqref{mminuu}$\Rightarrow$\eqref{convmmcone1}.
Suppose that~\eqref{convmmcone1} fails, that is, $0\notin\conv(A)+(-\cone(B))$.
By the Decomposition Theorem for convex polyhedra (see for example Schrijver \cite[Corollary~7.1.b]{Schrij1986}%
\footnote{
The proof stated there for vector spaces over the reals is valid over any totally ordered division ring (in particular, it involves only [semi]linear algebra).
}),
$\conv(A)+(-\cone(B))$ is a convex polyhedron, that is, a finite intersection of closed affine half-spaces, of~$\kkp{I}$.
Since $0\notin\conv(A)+(-\cone(B))$, there must exist $x\in\kkp{I}$ such that $\scp{c}{x}\geq1$ whenever $c\in\conv(A)+(-\cone(B))$.
Since $0\in\cone(B)$, it follows that $\scp{a}{x}>0$ whenever $a\in A$; so $x\in\bigcap_{a\in A}\ps{a}$.
Moreover, for all $a\in A$, $b\in B$, and $\gl\in\kk^+$, $\scp{a-\gl b}{x}>0$, that is, $\scp{a}{x}>\gl\scp{b}{x}$.
Taking~$\gl$ large enough yields $\scp{b}{x}\le 0$, whence $x\in\bigcap_{a\in A}\ps{a}\setminus\bigcup_{b\in B}\ps{b}$; that is, \eqref{mminuu} fails.

\eqref{convmmcone1}$\Rightarrow$\eqref{convmmcone2} is trivial.
\end{proof}

\begin{remark}\label{Rk:mmleqjjcoz}
The case $B=\es$ in Lemma~\ref{L:mmleqjjcoz} states that $\bigcap_{a\in A}\ps{a}=\es$ if{f} $0\in\conv A$.
\end{remark}

%We will use Lemma \ref{L:mmleqjjcoz} in the following equivalent form.
%
%\begin{lemma}\label{L:mmleqjjcoz2}
%Let~$A$ and~$B$ be finite subsets of~$\kkp{I}$.
%Then $\bigcap_{a\in A}\ps{a}\subseteq\bigcup_{b\in B}\ps{b}$ if{f} there are elements $\xi_a,\eta_b\in\kk^+$, for $a\in A$ and $b\in B$, such that 
%$\sum_{a\in A}\xi_aa=\sum_{b\in B}\eta_bb$ and at least one $\xi_a$ is nonzero.
%\end{lemma}

Denote by~$\FL(I,\kk)$ the free $\kk$-vector lattice on a set~$I$.
As observed in Baker~\cite{Baker1968}, Bernau~\cite{Bern1969}, Madden \cite[Ch.~III]{MaddTh} (see also Wehrung \cite[page~13]{RAlg} for a summary), $\FL(I,\kk)$ canonically embeds into~$\kk^{\kkp{I}}$.
We sum up the corresponding facts that are relevant to our discussion.

\goodbreak
\begin{lemma}[Folklore]\label{L:BakBey}\hfill
\begin{enumerater}
\item\label{FlIkk}
$\FL(I,\kk)$ is isomorphic to the sublattice of $\kk^{\kkp{I}}$ generated by all linear functionals $\sum_{i\in I}a_i\gd_i$ associated to elements $a\in\kkp{I}$, via the assignment $i\mapsto\gd_i$.

\item\label{IdcFlIkk}
The assignment $\seq{x^+}\mapsto\ps{x}$ defines a lattice isomorphism $\Idc\FL(I,\kk)\to\Ops\kkp{I}$.
\end{enumerater}
\end{lemma}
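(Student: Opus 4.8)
The plan is to prove~\eqref{FlIkk} from the universal property of the free vector lattice, and then to deduce~\eqref{IdcFlIkk} from~\eqref{FlIkk} through a cozero-set correspondence, using Lemma~\ref{L:mmleqjjcoz} to produce the scalars. For~\eqref{FlIkk}, since~$\kk$ is totally ordered the power $\kk^{\kkp I}$ is a $\kk$-vector lattice under the pointwise operations, hence so is the sublattice $V\subseteq\kk^{\kkp I}$ generated by the coordinate projections $\pi_i\colon x\mapsto x_i$ (i.e.\ by the functionals $\hat a\colon x\mapsto\scp a x$, as $\hat a=\sum_i a_i\pi_i$ and $\pi_i$ is the functional associated to~$\gd_i$). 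By freeness the assignment $i\mapsto\pi_i$ extends uniquely to a $\kk$-\lhom\ $\gf\colon\FL(I,\kk)\to\kk^{\kkp I}$ with range exactly~$V$, so~$\gf$ is onto~$V$; the issue is injectivity. Writing $\mathrm{ev}_x\colon\FL(I,\kk)\to\kk$ for the unique $\kk$-\lhom\ sending each generator~$i$ to $x_i$, one has $\gf(p)(x)=\mathrm{ev}_x(p)$, so $\ker\gf=\bigcap_{x\in\kkp I}\ker(\mathrm{ev}_x)$. Thus injectivity is exactly the statement that the evaluation homomorphisms at points of~$\kkp I$ separate the points of $\FL(I,\kk)$, which I would obtain from two classical ingredients: (a)~$\kk$-vector lattices are representable, the variety being generated by the totally ordered space~$\kk$, so the homomorphisms into~$\kk$ jointly separate points; and (b)~each~$p$ involves only finitely many generators, so an evaluation at an arbitrary $c\in\kk^I$ agrees on~$p$ with an evaluation at a finitely supported $x\in\kkp I$, whence the finitely supported points already suffice. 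This is the content of the Baker--Bernau--Madden representation \cite{Baker1968,Bern1969,MaddTh}.

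For~\eqref{IdcFlIkk} I identify $\FL(I,\kk)$ with~$V$ and set $\operatorname{coz}(a)\eqdef\setm{y\in\kkp I}{a(y)>0}$ for $a\in\FL(I,\kk)^+$. First I record a normal form: the family of finite joins of finite meets of the functionals~$\hat x$ ($x\in\kkp I$) is closed under~$+$, under positive and negative scalar multiples, and under~$\vee,\wedge$ (by translation invariance and distributivity), hence is all of $\FL(I,\kk)$; applying $(-)\vee 0$ and distributing, every $a\in\FL(I,\kk)^+$ is $a=\bigvee_j\bigwedge_k\hat x_{jk}^+$. Since~$\operatorname{coz}$ carries the pointwise $\vee,\wedge$ to $\cup,\cap$, this gives $\operatorname{coz}(a)=\bigcup_j\bigcap_k\ps{x_{jk}}\in\Ops\kkp I$, in particular $\operatorname{coz}(\hat x^+)=\ps x$, and conversely every member of $\Ops\kkp I$ arises so; thus~$\operatorname{coz}$ maps $\FL(I,\kk)^+$ onto $\Ops\kkp I$. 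Using $\seq a\vee\seq b=\seq{a\vee b}$, $\seq a\wedge\seq b=\seq{a\wedge b}$, and $\operatorname{coz}(0)=\es$, the map $\seq a\mapsto\operatorname{coz}(a)$ will be a well-defined surjective $0$-lattice homomorphism $\Idc\FL(I,\kk)\to\Ops\kkp I$ as soon as it is shown to be order-reflecting, and a surjective order-embedding between lattices is automatically an isomorphism, sending $\seq{\hat x^+}$ to $\ps x$ as claimed.

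The main obstacle is the order-reflecting step: for $a,b\in\FL(I,\kk)^+$, that $\operatorname{coz}(a)\subseteq\operatorname{coz}(b)$ forces $a\le\gl b$ for some $\gl\in\kk^+$ (equivalently $\seq a\le\seq b$). The forward implication is the trivial pointwise remark that $a\le\gl b$ gives $\operatorname{coz}(a)\subseteq\operatorname{coz}(b)$. For the converse I would reduce, through the normal form and the equivalence $a\le\gl b\iff$ (each join-component $\bigwedge_k\hat x_{k}^+\le\gl b$), to the case $a=\bigwedge_k\hat x_k^+$ with $\bigcap_k\ps{x_k}=\operatorname{coz}(a)\subseteq\operatorname{coz}(b)=\bigcup_l\bigcap_m\ps{y_{lm}}$; off the cone $\bigcap_k\ps{x_k}$ the inequality is free since $a$ vanishes there and $b\ge 0$, while on that cone the inclusion of finite unions of finite intersections of half-spaces is fed into Lemma~\ref{L:mmleqjjcoz}, whose condition~\eqref{convmmcone2} ($\conv$ meeting $\cone$) delivers the nonnegative coefficients witnessing the required domination; a maximum over the finitely many components then yields a single~$\gl$. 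This semilinear extraction of~$\gl$ \emph{via} Lemma~\ref{L:mmleqjjcoz} is exactly what replaces, over an arbitrary totally ordered division ring, the compactness-of-the-sphere argument available over~$\RR$, and it is the only genuinely technical point.
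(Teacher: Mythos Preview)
The paper does not prove this lemma: it is labeled ``Folklore'' and simply cites Baker, Bernau, Madden, and~\cite{RAlg}, so there is no proof to compare against. Your outline for~\eqref{FlIkk} is the standard one. For~\eqref{IdcFlIkk}, the strategy via the cozero map is sound, and using Lemma~\ref{L:mmleqjjcoz} in place of a compactness argument is the right idea; indeed, the paper remarks just before Lemma~\ref{L:mmleqjjcoz} that it is ``implicit in'' Lemma~\ref{L:BakBey}, so you are running that implication in the reverse direction, which is legitimate since Lemma~\ref{L:mmleqjjcoz} is proved independently.

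There is, however, a gap in your final step. After reducing~$a$ to a single meet $\bigwedge_k\hat x_k^{\,+}$, you are left with the inclusion $\bigcap_k\ps{x_k}\subseteq\bigcup_l\bigcap_m\ps{y_{lm}}$ and claim this ``is fed into Lemma~\ref{L:mmleqjjcoz}''. But that lemma only treats inclusions of the shape $\bigcap_A\ps{a}\subseteq\bigcup_B\ps{b}$, with a bare union on the right, not a union of intersections; the phrase ``a maximum over the finitely many components'' does not bridge the gap. What is missing is the dual reduction on the~$b$ side: by distributivity, $b=\bigvee_l\bigwedge_m\hat y_{lm}^{\,+}=\bigwedge_{\sigma}\bigvee_l\hat y_{l,\sigma(l)}^{\,+}$ with~$\sigma$ ranging over choice functions, so $\operatorname{coz}(b)=\bigcap_{\sigma}\bigcup_l\ps{y_{l,\sigma(l)}}$. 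For each fixed~$\sigma$ the inclusion $\bigcap_k\ps{x_k}\subseteq\bigcup_l\ps{y_{l,\sigma(l)}}$ now matches Lemma~\ref{L:mmleqjjcoz}, whose coefficients yield $\bigwedge_k\hat x_k^{\,+}\le\gl_{\sigma}\bigvee_l\hat y_{l,\sigma(l)}^{\,+}$ via the estimate $(\sum_a\xi_a)\min_a\scp{a}{p}\le\sum_a\xi_a\scp{a}{p}=\sum_b\eta_b\scp{b}{p}\le(\sum_b\eta_b)\max_b\scp{b}{p}$; then $\gl\eqdef\max_{\sigma}\gl_{\sigma}$ gives $a\le\gl b$. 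Alternatively, bypass the general order-reflection altogether and verify directly that for finite $A,B\subseteq\kkp{I}$ one has $\bigwedge_{a\in A}\seq{\hat a^{\,+}}\le\bigvee_{b\in B}\seq{\hat b^{\,+}}$ if{f} $\bigcap_{a\in A}\ps{a}\subseteq\bigcup_{b\in B}\ps{b}$: since both lattices are distributive and generated by the respective images of~$\kkp{I}$, matching these meet--join relations among generators already forces the isomorphism.
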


\section{Homomorphisms on the lattices~$\Ops\cD$}
\label{S:PreOOcD}

\emph{Standing hypothesis}: $\kk$ is a totally ordered division ring, $J=I\sqcup\set{o}$ is a set, and~$L$ is a \dzlat.

A set~$\cD$ of vectors is \emph{symmetric} if $-u\in\cD$ whenever $u\in\cD$.
An element $a\in\kkp{J}$ is \emph{normalized} if $a_o\in\set{-1,0,1}$, and a subset~$\cD$ of~$\kkp{J}$ is normalized if each of its elements is normalized.

A map $\gf\colon\Ops\cD\to L$ \emph{extends} a map $f\colon\cD\to L$ if $\gf\ps{u}=f(u)$ whenever $u\in\cD$ (in order to avoid cluttering we are writing~$\gf\ps{u}$ instead of~$\gf(\ps{u})$).

\begin{lemma}\label{L:ExtIcupD}
Let~$\cD$ be a normalized symmetric subset of $\kkp{J}\setminus\kkp{I}$, let $c\in\kkp{J}$ be normalized, and let $\bc^+,\bc^-\in\nobreak L$.
A $0$-lattice homomorphism $\gf\colon\Ops(\kkp{I}\cup\cD)\to L$ extends to some $0$-lattice homomorphism $\psi\colon\Ops(\kkp{I}\cup\cD\cup\set{c,-c})\to\nobreak L$ such that $(\psi\ps{c},\psi\ps{-c})=(\bc^+,\bc^-)$ if{f} the following inequalities hold for any $u\in\cD\cap(c+\kkp{I})$:
 \begin{align}
 \bc^+\wedge\bc^-&=0\,;\label{Eq:c+-=0}\\
 \bc^+&\leq\gf\ps{c-u}\vee\gf\ps{u}\,;\label{Eq:UBa+}\\
 \gf\ps{u}&\leq\gf\ps{u-c}\vee\bc^+\,;\label{Eq:LB1a+}\\
 \gf\ps{c-u}&\leq\gf\ps{-u}\vee\bc^+\,;\label{Eq:LB2a+}\\
 \bc^-&\leq\gf\ps{u-c}\vee\gf\ps{-u}\,;\label{Eq:UBa-}\\
 \gf\ps{-u}&\leq\gf\ps{c-u}\vee\bc^-\,;\label{Eq:LB1a-}\\
 \gf\ps{u-c}&\leq\gf\ps{u}\vee\bc^-\,.\label{Eq:LB2a-}
 \end{align}
Furthermore, if~$\cD$ is finite and the range of~$\gf$ is consonant in~$L$, then such a pair $(\bc^+,\bc^-)$ always exists.
\end{lemma}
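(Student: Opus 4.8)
The plan is to produce the required pair as a controlled \emph{shrinking}, via consonance, of the largest admissible candidates. I may assume $\cD\cap(c+\kkp{I})\neq\es$, since otherwise only~\eqref{Eq:c+-=0} is demanded and $(\bc^+,\bc^-)=(0,0)$ works. For $u\in\cD\cap(c+\kkp{I})$ abbreviate $p_u=\gf\ps{u}$, $q_u=\gf\ps{-u}$, $r_u=\gf\ps{c-u}$, $s_u=\gf\ps{u-c}$; note $c-u,u-c\in\kkp{I}$, so all four lie in $\rng\gf$, and Lemma~\ref{L:Eltaryps}\eqref{ps0} together with $\gf$ preserving $\wedge$ and~$0$ yields the disjointness relations $p_u\wedge q_u=0=r_u\wedge s_u$. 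I would then set
\[
\ol{\bc^+}\eqdef\bigwedge_{u}(r_u\vee p_u)\,,\qquad
\ol{\bc^-}\eqdef\bigwedge_{u}(s_u\vee q_u)\,,
\]
the meets ranging over $u\in\cD\cap(c+\kkp{I})$. Since $\gf$ preserves finite meets, $\ol{\bc^+}=\gf\bigl(\bigcap_u(\ps{c-u}\cup\ps{u})\bigr)$ and similarly for~$\ol{\bc^-}$, so \emph{both candidates lie in~$\rng\gf$}.

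First I would verify that $(\ol{\bc^+},\ol{\bc^-})$ satisfies all the one-sided inequalities, i.e. the barred analogues of \eqref{Eq:UBa+}--\eqref{Eq:LB2a+} and \eqref{Eq:UBa-}--\eqref{Eq:LB2a-}. The upper bounds \eqref{Eq:UBa+} and \eqref{Eq:UBa-} are immediate, as $\ol{\bc^+}\le r_u\vee p_u$ and $\ol{\bc^-}\le s_u\vee q_u$. After distributing each join over the defining meet, the four lower bounds reduce to pointwise containments such as $\ps{u}\subseteq\ps{u-c}\cup\ps{c-v}\cup\ps{v}$ (because $(u-c)+(c-v)+v=u$), which follow by iterating Lemma~\ref{L:Eltaryps}\eqref{pscap}; applying~$\gf$ gives $p_u\le s_u\vee r_v\vee p_v$ for all~$v$, whence $p_u\le s_u\vee\ol{\bc^+}$. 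The decompositions $(c-u)=(-u)+(c-v)+v$, $(-u)=(c-u)+(v-c)+(-v)$, and $(u-c)=u+(v-c)+(-v)$ dispatch the remaining three, giving $r_u\le q_u\vee\ol{\bc^+}$, $q_u\le r_u\vee\ol{\bc^-}$, and $s_u\le p_u\vee\ol{\bc^-}$.

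The single inequality that the maximal candidates need \emph{not} satisfy is the disjointness~\eqref{Eq:c+-=0}: geometrically $\bigcap_u(\ps{c-u}\cup\ps{u})$ and $\bigcap_u(\ps{u-c}\cup\ps{-u})$ both enclose the missing half-space but may overlap elsewhere. This is exactly where I expect the difficulty to concentrate, and it is precisely what the consonance hypothesis repairs. Since $\ol{\bc^+},\ol{\bc^-}\in\rng\gf$ and $\rng\gf$ is consonant in~$L$, I would fix a splitting pair $(\bc^+,\bc^-)$ for $(\ol{\bc^+},\ol{\bc^-})$ (Definition~\ref{D:Cons}), so that $\ol{\bc^+}=(\ol{\bc^+}\wedge\ol{\bc^-})\vee\bc^+$, $\ol{\bc^-}=(\ol{\bc^+}\wedge\ol{\bc^-})\vee\bc^-$, and $\bc^+\wedge\bc^-=0$. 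Then \eqref{Eq:c+-=0} holds by construction, and \eqref{Eq:UBa+},\eqref{Eq:UBa-} persist because $\bc^+\le\ol{\bc^+}$ and $\bc^-\le\ol{\bc^-}$.

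The final point is that the four lower bounds survive the shrinking, and this is where the disjointness relations pay off. Writing $d\eqdef\ol{\bc^+}\wedge\ol{\bc^-}$, the barred form of \eqref{Eq:LB1a+} reads $p_u\le s_u\vee\bc^+\vee d$, so by distributivity it is enough to absorb $p_u\wedge d$; and indeed $p_u\wedge d\le p_u\wedge\ol{\bc^-}\le p_u\wedge(s_u\vee q_u)=(p_u\wedge s_u)\vee(p_u\wedge q_u)\le s_u$, using $p_u\wedge q_u=0$. Hence $p_u\le s_u\vee\bc^+$, which is \eqref{Eq:LB1a+}. The other three go identically: $r_u\wedge d\le r_u\wedge\ol{\bc^-}\le q_u$ (via $r_u\wedge s_u=0$) gives \eqref{Eq:LB2a+}; $q_u\wedge d\le q_u\wedge\ol{\bc^+}\le r_u$ (via $p_u\wedge q_u=0$) gives \eqref{Eq:LB1a-}; and $s_u\wedge d\le s_u\wedge\ol{\bc^+}\le p_u$ (via $r_u\wedge s_u=0$) gives \eqref{Eq:LB2a-}. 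Thus $(\bc^+,\bc^-)$ satisfies \eqref{Eq:c+-=0}--\eqref{Eq:LB2a-}. The conceptual crux is the observation that \emph{maximality already forces every inequality except disjointness}, and that one splitting pair restores disjointness while the relations $p_u\wedge q_u=0=r_u\wedge s_u$ guarantee that the surviving lower bounds are not damaged.
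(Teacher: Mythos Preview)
Your proposal addresses only the final sentence of the lemma---the existence of a pair $(\bc^+,\bc^-)$ when~$\cD$ is finite and $\rng\gf$ is consonant---and does not touch the main equivalence, namely that \eqref{Eq:c+-=0}--\eqref{Eq:LB2a-} are not only necessary but \emph{sufficient} for the extension~$\psi$ to exist. In the paper that sufficiency is the bulk of the work: one must verify, via Lemma~\ref{L:mmleqjjcoz} and an induction on the lexicographic pair $(k+l,m)$ with a four-case analysis, that every containment $\bigcap_{p}\ps{u_p}\subseteq\bigcup_{q}\ps{v_q}$ in $\Ops(\kkp{I}\cup\cD\cup\set{c,-c})$ is reflected by the corresponding inequality in~$L$. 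Nothing in your write-up substitutes for this.

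For the existence assertion itself, your argument is correct and takes a genuinely different route from the paper's. The paper dispatches it by citing Wehrung \cite[Lemma~6.6]{MV1} as a black box: restrict~$\gf$ to the finite set $\cK\eqdef\cD\cup\bigl(\kkp{I}\cap((c+\cD)\cup(-c+\cD))\bigr)$, extend to $\Ops(\cK\cup\set{c,-c})$ by that lemma, and read off $\bc^\pm\eqdef\psi_0\ps{\pm c}$. Your construction is self-contained: form the maximal candidates $\ol{\bc^\pm}$ as finite meets inside $\rng\gf$, apply a \emph{single} splitting via consonance, and then use the disjointness relations $p_u\wedge q_u=0=r_u\wedge s_u$ to show that the defect $d=\ol{\bc^+}\wedge\ol{\bc^-}$ is absorbed in each of the four lower bounds. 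This is more elementary, avoids the external citation, and makes transparent exactly which instance of consonance is needed. One small point worth stating explicitly: the inequalities $\bc^+\le\ol{\bc^+}$ and $\bc^-\le\ol{\bc^-}$ that you invoke for \eqref{Eq:UBa+} and \eqref{Eq:UBa-} do hold automatically for a splitting pair in a distributive lattice, since $\bc^+=\bc^+\wedge(\ol{\bc^+}\vee\bc^-)=(\bc^+\wedge\ol{\bc^+})\vee(\bc^+\wedge\bc^-)=\bc^+\wedge\ol{\bc^+}$, but this deserves a word.
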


\begin{proof}
The given conditions are obviously necessary (cf. Lemma~\ref{L:Eltaryps}).
This also yields the desired equivalence in case $c\in\kkp{I}\cup\cD$ (in which case $\psi=\gf$).

Thus suppose, from now on, that $c\notin\kkp{I}\cup\cD$ and that \eqref{Eq:c+-=0}--\eqref{Eq:LB2a-} all hold.
We need to verify that there exists an extension~$\psi$ of~$\gf$ as stated.
Set $\cD'\eqdef\cD\cup\set{c,-c}$ and define a map $g\colon\kkp{I}\cup\cD'\to L$ by setting $g(x)\eqdef\gf\ps{x}$ whenever $x\in\kkp{I}\cup\cD$, whereas $g(c)\eqdef\bc^+$ and $g(-c)\eqdef\bc^-$.
It suffices to verify that~$g$ extends to a $0$-lattice homomorphism $\Ops(\kkp{I}\cup\cD')\to L$ (for any such homomorphism would necessarily extend~$\gf$).
By Plo\v{s}\v{c}ica and Wehrung \cite[Lemma~4.4]{MV2}, it suffices to verify that for any integers $k>0$ and $l\geq0$, together with elements $u_p,v_q\in\kkp{I}\cup\cD'$ for $1\leq p\leq k$ and $1\leq q\leq l$, the containment
 \begin{equation}\label{Eq:ContkkpID'}
 \bigcap\nolimits_{1\leq p\leq k}\ps{u_p}
 \subseteq
 \bigcup\nolimits_{1\leq q\leq l}\ps{v_q}
 \end{equation}
entails the inequality
 \begin{equation}\label{Eq:IneqkkpD'}
 \bigwedge\nolimits_{1\leq p\leq k}g(u_p)
 \leq\bigvee\nolimits_{1\leq q\leq l}g(v_q)\,.
 \end{equation}
Set $U\eqdef\set{u_1,\dots,u_k}$ and $V\eqdef\set{v_1,\dots,v_l}$.
We argue by induction on the lexicographically ordered pair $(k+l,m)$ where~$m$ is the cardinality of $(U\cup V)\setminus\kkp{I}$.
We may thus assume that no proper subfamily of $(u_1,\dots,u_k,v_1,\dots,v_l)$ satisfies~\eqref{Eq:ContkkpID'};
we will express this by stating that ``$k+l$ is minimal subjected to~\eqref{Eq:IneqkkpD'}''.
Because of this and since~\eqref{Eq:IneqkkpD'} trivially holds in case some $u_p=v_q$, we may also assume that~$u_1$, \dots, $u_k$, $v_1$, \dots, $v_l$ are pairwise distinct.

\begin{sclaim}
For all $x,y\in\cD'$, $x+y\in\kkp{I}$ implies that $g(x)\wedge g(y)\leq g(x+y)\leq g(x)\vee g(y)$ and $g(x)\leq g(x+y)\vee g(-y)$.
\end{sclaim}

\begin{scproof}
The inequalities $g(x+y)\leq g(x)\vee g(y)$ and $g(x)\leq g(x+y)\vee g(-y)$ follow from~$\gf$ being a \jh, if $\set{x,y}\subseteq\cD$, and from \eqref{Eq:c+-=0}--\eqref{Eq:LB2a-}, if $\set{x,y}\cap\set{c,-c}\neq\es$.
Since $g(y)\wedge g(-y)=0$ (use our assumption~\eqref{Eq:c+-=0}), the second inequality implies that $g(x)\wedge g(y)\leq g(x+y)$.
\end{scproof}

It follows from Lemma~\ref{L:mmleqjjcoz} that there are $\ga_p,\gb_q\in\kk^+$, for $1\leq p\leq k$ and $1\leq q\leq l$, such that some~$\ga_p$ is nonzero and
 \begin{equation}\label{Eq:aupbvq}
 \sum\nolimits_{1\leq p\leq k}\ga_pu_p=
 \sum\nolimits_{1\leq q\leq l}\gb_qv_q\,.
 \end{equation}
Since $k+l$ is minimal subjected to~\eqref{Eq:IneqkkpD'}, all~$\ga_p$ and all~$\gb_q$ are nonzero.
If $m=0$ then $U\cup V\subseteq\kkp{I}$ and the conclusion~\eqref{Eq:IneqkkpD'} follows from~$\gf$ being a lattice homomorphism.
If~$m$ is nonzero, we separate cases.

\setcounter{case}{0}

\begin{case}\label{Ca:v1+v2}
There are distinct~$p$, $q$ such that $\set{v_p,v_q}\subseteq\cD'$ and $v_p+v_q\in\kkp{I}$.
\end{case}

Up to permutation of the~$v_r$, we may assume that $\set{v_1,v_2}\subseteq\cD'$, $v_1+v_2\in\kkp{I}$, and $\gb_1\leq\gb_2$.
Now~\eqref{Eq:aupbvq} can be rewritten as
 \[
 \sum\nolimits_{1\leq p\leq k}\ga_pu_p
 =\gb_1(v_1+v_2)+(\gb_2-\gb_1)v_2+
 \sum\nolimits_{3\leq q\leq l}\gb_qv_q\,.
 \]
It follows that $\bigcap_{1\leq p\leq k}\ps{u_p}\subseteq \ps{v_1+v_2}\cup\bigcup_{2\leq q\leq l}\ps{v_q}$.
By our induction hypothesis (which applies since $v_1\in\cD'$ whereas $v_1+v_2\in\kkp{I}$), we get $\bigwedge_{1\leq p\leq k}g(u_p)\leq g(v_1+v_2)\vee\bigvee_{2\leq q\leq l}g(v_q)$, which, by the Claim above applied to the inequality $g(v_1+v_2)\leq g(v_1)\vee g(v_2)$, entails~\eqref{Eq:IneqkkpD'}.

\begin{case}\label{Ca:u1+u2}
There are distinct~$p$, $q$ such that $\set{u_p,u_q}\subseteq\cD'$ and $u_p+u_q\in\kkp{I}$.
\end{case}

Up to permutation of the~$u_r$, we may assume that $\set{u_1,u_2}\subseteq\cD'$, $u_1+u_2\in\kkp{I}$, and $\ga_1\leq\ga_2$.
Now~\eqref{Eq:aupbvq} can be rewritten as
 \[
 \ga_1(u_1+u_2)+(\ga_2-\ga_1)u_2+\sum\nolimits_{3\leq p\leq k}\ga_pu_p=
 \sum\nolimits_{1\leq q\leq l}\gb_qv_q\,.
 \]
Since at least one element of $\set{\ga_1,\ga_2-\ga_1,\ga_3,\dots,\ga_k}$ is positive, it follows from our induction hypothesis that
 \[
 g(u_1+u_2)\wedge
 \bigwedge\nolimits_{2\leq p\leq k}g(u_k)\leq
 \bigvee\nolimits_{1\leq q\leq l}g(v_q)\,.
 \]
Since, by our Claim, $g(u_1)\wedge g(u_2)\leq g(u_1+u_2)$, \eqref{Eq:IneqkkpD'} follows.

Now suppose that neither Case~\ref{Ca:v1+v2} nor Case~\ref{Ca:u1+u2} occurs.
Since~$\cD'$ is normalized, all~$u_p$ from~$\cD'$ have the same $o$-coordinate~$\eps$ and all~$v_q$ from~$\cD'$ have the same $o$-coordinate~$\eta$ (so $\set{\eps,\eta}\subseteq\set{-1,1}$).
Since $m>0$, there must be at least one such vector on each side, and further, due to~\eqref{Eq:aupbvq}, we get $\eps=\eta$.
We may thus assume that $\set{u_1,v_1}\subseteq\cD'$ and $u_1-v_1\in\kkp{I}$.
Two cases may then occur.

\begin{case}\label{Ca:a1>b1}
$\ga_1>\gb_1$.
\end{case}

Rewriting~\eqref{Eq:aupbvq} as
 \[
 (\ga_1-\gb_1)u_1+
 \sum\nolimits_{2\leq p\leq k}\ga_pu_p=
 \gb_1(v_1-u_1)+
 \sum\nolimits_{2\leq q\leq l}\gb_qv_q\,,
 \]
our induction hypothesis entails
 \begin{equation}\label{Eq:gv1-u1leq}
 \bigwedge\nolimits_{1\leq p\leq k}g(u_p)\leq
 g(v_1-u_1)\vee
 \bigvee\nolimits_{2\leq q\leq l}g(v_q)\,.
 \end{equation}
Now our Claim entails $g(v_1-u_1)\leq g(v_1)\vee g(-u_1)$, thus, since the left hand side of~\eqref{Eq:gv1-u1leq} lies below~$g(u_1)$ and $g(u_1)\wedge g(-u_1)=0$, \eqref{Eq:IneqkkpD'} holds.

\begin{case}\label{Ca:a1leqb1}
$\ga_1\leq\gb_1$.
\end{case}

Rewriting~\eqref{Eq:aupbvq} as
 \[
 \ga_1(u_1-v_1)+
 \sum\nolimits_{2\leq p\leq k}\ga_pu_p=
 (\gb_1-\ga_1)v_1+
 \sum\nolimits_{2\leq q\leq l}\gb_qv_q\,,
 \]
our induction hypothesis entails
 \[
 g(u_1-v_1)\wedge
 \bigwedge\nolimits_{2\leq p\leq k}g(u_p)\leq
 \bigvee\nolimits_{1\leq q\leq l}g(v_q)\,.
 \]
Since, by our Claim, $g(u_1)\leq g(u_1-v_1)\vee g(v_1)$, \eqref{Eq:IneqkkpD'} follows.
This concludes the proof of the equivalence between the existence of~$\psi$ and the system of inequalities \eqref{Eq:c+-=0}--\eqref{Eq:LB2a-}.

Now suppose that~$\cD$ is finite and the range of~$\gf$ is consonant in~$L$.
Since the set $\cK\eqdef\cD\cup\pI{\kkp{I}\cap((c+\cD)\cup(-c+\cD))}$ is finite, it follows from Wehrung \cite[Lemma~6.6]{MV1} (stated there on Abelian \lgrp{s}; the $\kk$-vector lattice version, stated in Wehrung \cite[Lemma~4.7]{RAlg}, is, \emph{mutatis mutandis}, identical) that the restriction of~$\gf$ to~$\Ops\cK$ extends to a $0$-lattice homomorphism $\psi_0\colon\Ops(\cK\cup\set{c,-c})\to L$.
Since $\set{u,-u,u-c,c-u}$ is contained in~$\cK$ whenever $u\in\cD\cap(c+\kkp{I})$, all conditions \eqref{Eq:c+-=0}--\eqref{Eq:LB2a-}, with $\bc^+\eqdef\psi_0\ps{c}$ and $\bc^-\eqdef\psi_0\ps{-c}$, are satisfied.
\end{proof}

\section{The Closure Step for open half-spaces}
\label{S:ClosStep}

\emph{Standing hypothesis}: $\kk$ is a totally ordered division ring and~$L$ is a \dzlat.

The following preparatory lemma aims at providing a better understanding of a map be closed at a pair of open half-spaces.

\begin{lemma}\label{L:Closatab}
The following statements hold, for every $0$-lattice homomorphism $\gf\colon\Ops\kkp{I}\to L$ and all $a,b\in\kkp{I}$:
\begin{enumerater}
\item\label{ConjDisjClos}
For every $\gl\in\kk^+$ and every $\be\in L$, the inequality $\gf\ps{a}\wedge \gf\ps{a-\gl b}\leq\be$ is equivalent to the conjunction of $\gf\ps{a}\leq \gf\ps{b}\vee\be$ and $\gf\ps{a-\gl b}\leq \gf\ps{-b}\vee\be$.

\item\label{IncrClos}
Suppose that $\gf\ps{a}\leq \gf\ps{b}\vee\be$.
Then for all $\gl,\gl'\in\kk$ such that $0\leq\gl\leq\gl'$, $\gf\ps{a}\wedge \gf\ps{a-\gl b}\leq\be$ implies that $\gf\ps{a}\wedge \gf\ps{a-\gl'b}\leq\be$.

\item\label{HypClos}
The map~$\gf$ is closed at $(\ps{a},\ps{b})$ if{f} for every $\be\in L$ such that $\gf\ps{a}\leq \gf\ps{b}\vee\be$ there exists $\gl\in\kk^+$ such that $\gf\ps{a-\gl b}\leq \gf\ps{-b}\vee\be$.
\end{enumerater}
\end{lemma}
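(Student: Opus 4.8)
The plan is to obtain \eqref{ConjDisjClos} and \eqref{IncrClos} formally, from distributivity plus a few elementary half-space containments, and to isolate the genuine difficulty in one geometric claim underlying \eqref{HypClos}. The basic containments I would record, valid for every $\gl\in\kk^+$, are $\ps{a}\subseteq\ps{b}\cup\ps{a-\gl b}$ and $\ps{a-\gl b}\subseteq\ps{-b}\cup\ps{a}$; both are immediate from Notation~\ref{Not:ps1} (if $\scp{a}{x}>0$ and $\scp{b}{x}\leq0$ then $\scp{a-\gl b}{x}\geq\scp{a}{x}>0$, and symmetrically). I would also note $\gf\ps{b}\wedge\gf\ps{-b}=0$, since $\ps{b}\cap\ps{-b}=\es$ by Lemma~\ref{L:Eltaryps}\eqref{ps0} and $\gf$ is a $0$-lattice homomorphism. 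Applying~$\gf$ to the first containment gives $\gf\ps{a}\leq\gf\ps{b}\vee\gf\ps{a-\gl b}$; then \eqref{ConjDisjClos} follows in both directions by distributivity, the cross term $\gf\ps{b}\wedge\gf\ps{-b}$ collapsing to~$0$ (for ``$\Leftarrow$'' meet the two hypotheses; for ``$\Rightarrow$'' expand $\gf\ps{a}=(\gf\ps{a}\wedge\gf\ps{b})\vee(\gf\ps{a}\wedge\gf\ps{a-\gl b})$ and the symmetric identity, bounding each summand).

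For \eqref{IncrClos} I would add the containment $\ps{a-\gl'b}\subseteq\ps{a-\gl b}\cup\ps{-b}$, valid because $a-\gl'b=(a-\gl b)+(\gl'-\gl)(-b)$ with $\gl'-\gl\in\kk^+$ (Lemma~\ref{L:Eltaryps}\eqref{pscap},\eqref{pshomog}). Under the hypothesis $\gf\ps{a}\leq\gf\ps{b}\vee\be$, part \eqref{ConjDisjClos} rewrites $\gf\ps{a}\wedge\gf\ps{a-\gl b}\leq\be$ as $\gf\ps{a-\gl b}\leq\gf\ps{-b}\vee\be$; the new containment then yields $\gf\ps{a-\gl'b}\leq\gf\ps{a-\gl b}\vee\gf\ps{-b}\leq\gf\ps{-b}\vee\be$, and a second use of \eqref{ConjDisjClos} returns $\gf\ps{a}\wedge\gf\ps{a-\gl'b}\leq\be$.

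In \eqref{HypClos} the direction producing closedness is easy: given $\be$ with $\gf\ps{a}\leq\gf\ps{b}\vee\be$ and the promised $\gl$, part \eqref{ConjDisjClos} gives $\gf\ps{a}\wedge\gf\ps{a-\gl b}\leq\be$, so $u\eqdef\ps{a}\cap\ps{a-\gl b}$ lies in $\Ops\kkp{I}$, satisfies $\gf(u)\leq\be$, and obeys $\ps{a}\subseteq\ps{b}\cup u$ by the first containment---hence witnesses closedness at $(\ps{a},\ps{b})$ in the sense of Definition~\ref{D:Closedat}. Conversely, closedness supplies, for each such~$\be$, some $u\in\Ops\kkp{I}$ with $\ps{a}\subseteq\ps{b}\cup u$ and $\gf(u)\leq\be$, and it then suffices to establish the geometric claim that \emph{there is $\gl\in\kk^+$ with $\ps{a}\cap\ps{a-\gl b}\subseteq u$}: for then $\gf\ps{a}\wedge\gf\ps{a-\gl b}=\gf(\ps{a}\cap\ps{a-\gl b})\leq\gf(u)\leq\be$, which by \eqref{ConjDisjClos} is exactly $\gf\ps{a-\gl b}\leq\gf\ps{-b}\vee\be$.

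To prove that claim I would reduce to finite~$I$ (only finitely many coordinates occur in $a$, $b$, and in the vectors defining~$u$), put~$u$ in disjunctive normal form so that $\kkp{I}\setminus u$ becomes a finite union of polyhedral cones $E=\setm{x}{\scp{w}{x}\leq0 \ (w\in F)}$ with~$F$ finite, and treat each~$E$ separately, taking $\gl$ to be the maximum of the resulting~$\gl_E$. For a fixed~$E$ the inclusion $\ps{a}\subseteq\ps{b}\cup u$ gives $\ps{a}\cap E\subseteq\ps{b}$, on which $\scp{b}{x}>0$, so the task becomes bounding the ratio $\scp{a}{x}/\scp{b}{x}$ above by some $\gl_E\in\kk$; this is a linear-programming boundedness statement that I would settle through the Decomposition Theorem for convex polyhedra over a totally ordered division ring (the very tool behind Lemma~\ref{L:mmleqjjcoz}): an unbounded ratio would yield a recession direction $d\in E$ with $\scp{a}{d}>0$ and $\scp{b}{d}\leq0$, whereas $d\in\ps{a}\cap E\subseteq\ps{b}$ forces $\scp{b}{d}>0$. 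The program is therefore bounded, its optimum furnishes $\gl_E$, and $\ps{a}\cap\ps{a-\gl_E b}\cap E=\es$. I expect this conversion of the qualitative inclusion $\ps{a}\cap E\subseteq\ps{b}$ into a \emph{uniform} scalar bound, over a possibly non-Archimedean~$\kk$ where topological compactness is unavailable, to be the main obstacle of the argument.
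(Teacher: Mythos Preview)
Your arguments for \eqref{ConjDisjClos}, \eqref{IncrClos}, and the ``condition $\Rightarrow$ closed'' direction of \eqref{HypClos} are essentially the paper's own. The genuine divergence is in the ``closed $\Rightarrow$ condition'' direction of \eqref{HypClos}.

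The paper avoids your polyhedral claim entirely: it invokes the isomorphism $\Ops\kkp{I}\cong\Idc\FL(I,\kk)$ of Lemma~\ref{L:BakBey}\eqref{IdcFlIkk}, under which closedness at $(\ps{a},\ps{b})$ yields $e\in\FL(I,\kk)^+$ with $\seq{a^+}\leq\seq{b^+}\vee\seq{e}$ and $\gf\seq{e}\leq\be$. The first relation unpacks to $a^+\leq\gl(b^++e)$ for some $\gl\in\kk^+$, hence $(a^+-\gl b^+)^+\leq\gl e$; the vector-lattice identity $(a^+-\gl b^+)^+=a^+\wedge(a-\gl b)^+$ then gives $\gf\ps{a}\wedge\gf\ps{a-\gl b}\leq\gf\seq{e}\leq\be$ directly. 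So the scalar~$\gl$ comes for free from the very definition of the principal \lidl\ ordering, and no geometry is needed.

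Your route is correct as well. Writing $\kkp{I}\setminus u$ as a finite union of closed polyhedral cones~$E$, normalizing to the slice $C\eqdef E\cap\setm{x}{\scp{b}{x}=1}$, and applying the polyhedral decomposition $C=\conv(V)+\cone(R)$ (valid over any totally ordered division ring, as the paper's footnote to Schrijver notes) does give the bound: unboundedness of $\scp{a}{\cdot}$ on~$C$ forces a generator $r\in R$ with $\scp{a}{r}>0$, and since~$R$ lies in the recession cone $E\cap\setm{x}{\scp{b}{x}=0}$ this $r$ contradicts $\ps{a}\cap E\subseteq\ps{b}$. Monotonicity in~$\gl$ (your part~\eqref{IncrClos}, or directly the inclusion $\ps{a}\cap E\subseteq\ps{b}$) lets you take a common~$\gl$ across the finitely many~$E$. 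What this buys is a self-contained geometric proof that does not touch the free vector lattice~$\FL(I,\kk)$; the price is a longer argument and an appeal to the decomposition theorem, whereas the paper's three-line algebraic reduction hides the work inside the already-established duality of Lemma~\ref{L:BakBey}.
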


\begin{proof}
\emph{Ad}~\eqref{ConjDisjClos}.
Suppose first that $\gf\ps{a}\wedge\gf\ps{a-\gl b}\leq\be$.
{F}rom $\gl\geq0$ it follows that $\ps{a}\subseteq\ps{b}\cup(\ps{a}\cap\ps{a-\gl b})$, thus
 \[
 \gf\ps{a}\leq \gf\ps{b}\vee\pI{\gf\ps{a}\wedge \gf\ps{a-\gl b}}
 \leq \gf\ps{b}\vee\be\,. 
 \]
Moreover, from $\gl\geq0$ it follows that $\ps{a-\gl b}\subseteq\ps{a}\cup\ps{-b}$, so we get
 \[
 \gf\ps{a-\gl b}\leq\pI{\gf\ps{a}\wedge \gf\ps{a-\gl b}}\vee
 \gf\ps{-b}\leq \gf\ps{-b}\vee\be\,.
 \]
Suppose, conversely, that $\gf\ps{a}\leq \gf\ps{b}\vee\be$ and $\gf\ps{a-\gl b}\leq \gf\ps{-b}\vee\be$.
{F}rom the former inequality together with $\gf\ps{b}\wedge \gf\ps{-b}=0$ it follows that $\gf\ps{a}\wedge \gf\ps{-b}\leq\be$, whence
 \[
 \gf\ps{a}\wedge \gf\ps{a-\gl b}\leq
 \gf\ps{a}\wedge(\gf\ps{-b}\vee\be)\leq\be\,.
 \]

\emph{Ad}~\eqref{IncrClos}.
Suppose that $\gf\ps{a}\wedge \gf\ps{a-\gl b}\leq\be$.
By~\eqref{ConjDisjClos} above, this means that $\gf\ps{a-\gl b}\leq \gf\ps{-b}\vee\be$.
Hence $\gf\ps{a-\gl'b}\leq \gf\ps{a-\gl b}\vee \gf\ps{(\gl-\gl')b}\leq \gf\ps{-b}\vee\be$, so, by~\eqref{ConjDisjClos} above, $\gf\ps{a}\wedge \gf\ps{a-\gl'b}\leq\be$.

\emph{Ad}~\eqref{HypClos}.
Suppose first that~$\gf$ is closed at $(\ps{a},\ps{b})$.
Identifying the lattices $\Ops\kkp{I}$ and~$\Idc\FL(I,\kk)$ \emph{via} the isomorphism $\ps{x}\mapsto\seq{x^+}$ given by Lemma~\ref{L:BakBey}, any inequality of the form $\gf\ps{a}\leq \gf\ps{b}\vee\be$, where $\be\in L$, translates to $\gf\seq{a^+}\leq\gf\seq{b^+}\vee\be$, thus, since~$\gf$ is closed at $(\ps{a},\ps{b})$, there exists $e\in\FL(I,\kk)^+$ such that $\seq{a^+}\leq\seq{b^+}\vee\seq{e}$ whereas $\gf\seq{e}\leq\be$.
The former inequality means that $a^+\leq\gl(b^++e)$ for some $\gl\in\kk^+$, so $(a^+-\gl b^+)^+\leq\gl e$.
By virtue of the identity $(a^+-\gl b^+)^+=a^+\wedge(a-\gl b)^+$ we thus get
 \[
 \gf\seq{a^+}\wedge\gf\seq{(a-\gl b)^+}\leq\gf\seq{\gl e}\leq
 \gf\seq{e}\leq\be\,,
 \]
which, owing to the identification given by $\Ops\kkp{I}\cong\Idc\FL(I,\kk)$\,, can be written
 \[
 \gf\ps{a}\wedge \gf\ps{a-\gl b}\leq\be\,.
 \]
Conversely, if for every $\be\in L$ such that $\gf\ps{a}\leq \gf\ps{b}\vee\be$ there exists $\gl\in\kk^+$ such that $\gf\ps{a-\gl b}\leq \gf\ps{-b}\vee\be$, that is, due to~\eqref{ConjDisjClos} above, $\gf(\ps{a}\cap\ps{a-\gl b})\leq\be$, then, due to the containment 
$\ps{a}\subseteq\ps{b}\cup(\ps{a}\cap\ps{a-\gl b})$, $\gf$ is closed at $(\ps{a},\ps{b})$.
\end{proof}

For the remainder of this section let $J=I\cup\set{o}$ with $o\notin I$, let~$\cD$ be a symmetric, normalized subset of~$\kkp{J}\setminus\kkp{I}$, let $\set{a,b}\subseteq\kkp{I}\cup\cD$, let $\be\in L$, and let\newline $\gf\colon\Ops(\kkp{I}\cup\cD)\to L$ be a \jzh.
We also assume that the restriction of~$\gf$ to~$\Ops\kkp{I}$ is closed and that $\gf\ps{a}\leq\gf\ps{b}\vee\be$.

\begin{notation}\label{Not:x^u}
For all $x\in\kkp{J}$ and $u\in\kkp{J}\setminus\kkp{I}$, we denote by~$x^u$ the unique element in $\kkp{I}\cap(x+\kk u)$.
That is, $x^u=x-x_ou_o^{-1}u$.
\end{notation}

Observe that $x^u=x^{-u}$.
Also, in all the cases that we will consider, $u$ will be normalized (i.e., $u_o\in\set{-1,1}$); so $u_o^{-1}=u_o$.

\begin{lemma}\label{L:Old31}
Assume $\set{a,b}\not\subseteq\kkp{I}$ and let $u\in\cD$ such that either $b_ou_o=1$ or \pup{$b_o=0$ and $a_ou_o=-1$}.
The following statements hold:
\begin{enumerater}
\item\label{Initaulbu}
$\gf\ps{a^u}\leq\gf\ps{b^u}\vee\gf\ps{u}\vee\gf\ps{-b}\vee\be$.

\item\label{aulbu}
For all large enough $\gl\in\kk^+$,
$\gf\ps{a^u-\gl b^u}\leq\gf\ps{u}\vee\gf\ps{-b}\vee\be$.

\end{enumerater}
\end{lemma}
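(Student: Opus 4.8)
The plan is to prove the two items separately, since they are of quite different natures. Statement~\eqref{Initaulbu} is a pure ``join-semilattice'' computation, using only that~$\gf$ is a \jh\ together with the elementary containments of Lemma~\ref{L:Eltaryps}; the closedness hypothesis plays no role there. Statement~\eqref{aulbu} is, by contrast, exactly where the closedness of the restriction of~$\gf$ to~$\Ops\kkp{I}$ is put to work, \emph{via} Lemma~\ref{L:Closatab}. Throughout I would keep in mind that $a=a^u+a_ou_ou$ and $b=b^u+b_ou_ou$ (using $u_o^{-1}=u_o$), and that the hypothesis on~$u$ splits into two cases: either $b_ou_o=1$, whence $b=b^u+u$; or $b_o=0$ and $a_ou_o=-1$, whence $b=b^u\in\kkp{I}$ and $a=a^u-u$.

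For~\eqref{Initaulbu} I would first record, from Lemma~\ref{L:Eltaryps} applied to $a^u=a+(-a_ou_o)u$, the containment $\ps{a^u}\subseteq\ps{a}\cup\ps{u}$ valid whenever $a_ou_o\leq0$, and $\ps{a^u}\subseteq\ps{a}\cup\ps{-u}$ valid whenever $a_ou_o\geq0$; similarly $b=b^u+u$ gives $\ps{b}\subseteq\ps{b^u}\cup\ps{u}$ in the first case. When $a_ou_o\leq0$ (this covers the whole second case, where $a_ou_o=-1$, as well as part of the first), applying~$\gf$ to $\ps{a^u}\subseteq\ps{a}\cup\ps{u}$ and then invoking $\gf\ps{a}\leq\gf\ps{b}\vee\be$ together with $\gf\ps{b}\leq\gf\ps{b^u}\vee\gf\ps{u}$ (or $\gf\ps{b}=\gf\ps{b^u}$ in the second case) already yields the stronger bound $\gf\ps{a^u}\leq\gf\ps{b^u}\vee\gf\ps{u}\vee\be$. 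The one remaining situation is $b_ou_o=1$ with $a_ou_o=1$, so that $a=a^u+u$; here I would use $\ps{a^u}\subseteq\ps{a}\cup\ps{-u}$ and then eliminate the unwanted term~$\gf\ps{-u}$ through the identity $-u=-b+b^u$, which gives $\ps{-u}\subseteq\ps{-b}\cup\ps{b^u}$, hence $\gf\ps{-u}\leq\gf\ps{-b}\vee\gf\ps{b^u}$. Chaining these with $\gf\ps{a}\leq\gf\ps{b}\vee\be$ and $\gf\ps{b}\leq\gf\ps{b^u}\vee\gf\ps{u}$ delivers precisely the right-hand side of~\eqref{Initaulbu}.

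For~\eqref{aulbu} I would set $\be''\eqdef\gf\ps{u}\vee\gf\ps{-b}\vee\be$. The first observation is that $\gf\ps{-b^u}\leq\be''$ in both cases: when $b=b^u+u$ this follows from $-b^u=-b+u$, giving $\ps{-b^u}\subseteq\ps{-b}\cup\ps{u}$, and when $b^u=b$ it is trivial. Next, since $a^u,b^u\in\kkp{I}$ and the restriction of~$\gf$ to~$\Ops\kkp{I}$ is closed, that restriction is closed at $(\ps{a^u},\ps{b^u})$; as~\eqref{Initaulbu} reads $\gf\ps{a^u}\leq\gf\ps{b^u}\vee\be''$, Lemma~\ref{L:Closatab}\eqref{HypClos} (applied to the restriction, with $\be''$ in the role of~$\be$) furnishes some $\gl_0\in\kk^+$ with $\gf\ps{a^u-\gl_0b^u}\leq\gf\ps{-b^u}\vee\be''=\be''$, which is~\eqref{aulbu} for $\gl=\gl_0$. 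To upgrade this to all large enough~$\gl$, I would note that, because $\gf\ps{-b^u}\leq\be''$ and $\gf\ps{a^u}\leq\gf\ps{b^u}\vee\be''$, Lemma~\ref{L:Closatab}\eqref{ConjDisjClos} makes the inequality $\gf\ps{a^u-\gl b^u}\leq\be''$ equivalent to $\gf\ps{a^u}\wedge\gf\ps{a^u-\gl b^u}\leq\be''$, and the latter propagates from~$\gl_0$ to every $\gl\geq\gl_0$ by Lemma~\ref{L:Closatab}\eqref{IncrClos}.

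I expect the only genuinely fiddly point to be the bookkeeping in~\eqref{Initaulbu}: tracking the sign of~$a_ou_o$ so as to produce~$\ps{u}$ whenever possible, and, when~$\ps{-u}$ is forced, trading it for $\gf\ps{-b}\vee\gf\ps{b^u}$ by means of $-u=-b+b^u$. Once~\eqref{Initaulbu} is established, statement~\eqref{aulbu} follows almost formally from the closedness hypothesis as repackaged in Lemma~\ref{L:Closatab}, the single substantive ingredient being the inequality $\gf\ps{-b^u}\leq\be''$, which is what collapses the bound $\gf\ps{-b^u}\vee\be''$ down to~$\be''$.
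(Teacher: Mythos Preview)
Your proposal is correct and follows essentially the same approach as the paper: part~\eqref{Initaulbu} via the elementary containments of Lemma~\ref{L:Eltaryps} together with $\gf\ps{a}\leq\gf\ps{b}\vee\be$, and part~\eqref{aulbu} via closedness of~$\gf\res_{\Ops\kkp{I}}$ at $(\ps{a^u},\ps{b^u})$ through Lemma~\ref{L:Closatab}. Your organization differs only cosmetically---you case-split on the sign of~$a_ou_o$ where the paper first splits on~$b_o$, and you are more explicit than the paper in spelling out how the single~$\gl_0$ furnished by Lemma~\ref{L:Closatab}\eqref{HypClos} is propagated to all $\gl\geq\gl_0$ via parts~\eqref{ConjDisjClos} and~\eqref{IncrClos}.
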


\begin{proof}
\emph{Ad}~\eqref{Initaulbu}.
{F}rom $a^u=a-a_ou_ou$ and $\gf\ps{a}\leq\gf\ps{b}\vee\be$ it follows that
 \begin{equation}\label{Eq:Inita^uaouo}
 \gf\ps{a^u}\leq\gf\ps{a}\vee\gf\ps{-a_ou_ou}\leq
 \gf\ps{b}\vee\gf\ps{-a_ou_ou}\vee\be\,.
 \end{equation}
If $b_o=0$ and $a_ou_o=-1$, then $b^u=b$ and~\eqref{Initaulbu} follows.
Let $b_ou_o=1$; so $b^u=b-u$.
Then $\gf\ps{b}\leq\gf\ps{b^u}\vee\gf\ps{u}$ thus, by~\eqref{Eq:Inita^uaouo},
 \begin{equation}\label{Eq:Inita^uaouo2}
 \gf\ps{a^u}\leq\gf\ps{b^u}\vee\gf\ps{u}\vee
 \gf\ps{-a_ou_ou}\vee\be\,.
 \end{equation}
Hence, if $a_ou_o\in\set{-1,0}$ then we get~\eqref{Initaulbu} right away.
If $a_ou_o=1$, then
 \[
 \gf\ps{-a_ou_ou}=\gf\ps{-u}\leq\gf\ps{b^u}\vee\gf\ps{-b}\,,
 \]
which, combined with~\eqref{Eq:Inita^uaouo2}, yields again~\eqref{Initaulbu}.

\emph{Ad}~\eqref{aulbu}.
Since~$\gf$ is closed at $(a^u,b^u)$ and by~\eqref{Initaulbu} together with Lemma~\ref{L:Closatab}, there exists $\gl_0\in\kk^+$ such that 
 \begin{equation}\label{Eq:gfclaubu}
 \gf\ps{a^u-\gl b^u}\leq
 \gf\ps{-b^u}\vee\gf\ps{u}\vee\gf\ps{-b}\vee\be\quad
 \text{whenever }\gl>\gl_0\,.
 \end{equation}
If $b_o=0$, then $b^u=b$ and we get~\eqref{aulbu} right away.
If $b_ou_o=1$ then $b^u=b-u$, thus 
$\gf\ps{-b^u}\leq\gf\ps{u}\vee\gf\ps{-b}$, which, together with~\eqref{Eq:gfclaubu}, yields~\eqref{aulbu} again.
\end{proof}

\begin{lemma}\label{L:Old32}
Assume $\set{a,b}\not\subseteq\kkp{I}$ and suppose that either $b_ou_o=-1$ or \pup{$b_o=0$ and $a_ou_o=1$}.
Then
 \begin{equation}\label{Eq:Minglbu-au}
 \gf\ps{u}\leq\gf\ps{\gl b^u-a^u}\vee\gf\ps{-b}\vee\be\quad
 \text{for all large enough }\gl\in\kk^+\,.
 \end{equation}
\end{lemma}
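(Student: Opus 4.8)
\emph{The plan} is to reduce the whole inequality to a single ``residual'' meet $\gf\ps{u}\wedge\gf\ps{a^u}$ and then to annihilate that meet using the closedness of $\gf$ on $\Ops\kkp{I}$, which Lemma~\ref{L:Old31} has already repackaged for us. Write $R\eqdef\gf\ps{\gl b^u-a^u}$. The first move is to feed $-u$ into Lemma~\ref{L:Old31}: since $\cD$ is symmetric we have $-u\in\cD$, and the hypothesis of Lemma~\ref{L:Old31} for $-u$, namely ``$b_o(-u_o)=1$ or \pup{$b_o=0$ and $a_o(-u_o)=-1$}'', is precisely the hypothesis of the present lemma. As $a^{-u}=a^u$ and $b^{-u}=b^u$, Lemma~\ref{L:Old31}\eqref{aulbu} then furnishes $\gl'_0\in\kk^+$ with
\begin{equation*}
\gf\ps{a^u-\gl'b^u}\leq\gf\ps{-u}\vee\gf\ps{-b}\vee\be\qquad\text{whenever }\gl'>\gl'_0\,.\tag{$\ast$}
\end{equation*}
I would fix one such $\gl'$ and prove the lemma for every $\gl>\gl'$.

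Next I would establish the auxiliary bound $\gf\ps{u}\leq\gf\ps{a^u}\vee R\vee\gf\ps{-b}\vee\be$, splitting on the hypothesis. If $b_ou_o=-1$ then $b^u=b+u$, so $u=b^u-b$; Lemma~\ref{L:Eltaryps}\eqref{pscap} gives $\ps{u}\subseteq\ps{b^u}\cup\ps{-b}$ and, from $\gl b^u=(\gl b^u-a^u)+a^u$ together with $\ps{\gl b^u}=\ps{b^u}$, also $\ps{b^u}\subseteq\ps{\gl b^u-a^u}\cup\ps{a^u}$, whence the bound. If instead $b_o=0$ and $a_ou_o=1$ then $b^u=b$ and $u=a-a^u=(a-\gl b)+(\gl b-a^u)$, so $\ps{u}\subseteq\ps{a-\gl b}\cup\ps{\gl b-a^u}$, while $\ps{a-\gl b}\subseteq\ps{a}\cup\ps{-b}$ and $\ps{b}\subseteq\ps{\gl b-a^u}\cup\ps{a^u}$; combining these with the standing inequality $\gf\ps{a}\leq\gf\ps{b}\vee\be$ again gives the bound. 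Meeting the auxiliary bound with $\gf\ps{u}$ and distributing in the distributive lattice $L$, the proof is reduced to the single inequality $\gf\ps{u}\wedge\gf\ps{a^u}\leq R\vee\gf\ps{-b}\vee\be$.

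To finish I would use the pointwise-verified containment, valid for every $\gl>\gl'\geq0$,
\begin{equation*}
\ps{a^u}\subseteq\ps{\gl b^u-a^u}\cup\ps{a^u-\gl'b^u}
\end{equation*}
(if $\scp{a^u}{x}>0$ while $\scp{\gl b^u-a^u}{x}\leq0$ then $\scp{a^u}{x}\geq\gl\scp{b^u}{x}$, which exceeds $\gl'\scp{b^u}{x}$ when $\scp{b^u}{x}>0$ and is trivially $>\gl'\scp{b^u}{x}$ otherwise). Applying $\gf$ yields $\gf\ps{a^u}\leq R\vee\gf\ps{a^u-\gl'b^u}$, so distributivity gives $\gf\ps{u}\wedge\gf\ps{a^u}\leq R\vee\pI{\gf\ps{u}\wedge\gf\ps{a^u-\gl'b^u}}$. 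By $(\ast)$ and one further distribution,
\begin{equation*}
\gf\ps{u}\wedge\gf\ps{a^u-\gl'b^u}\leq\pI{\gf\ps{u}\wedge\gf\ps{-u}}\vee\gf\ps{-b}\vee\be=\gf\ps{-b}\vee\be\,,
\end{equation*}
the last equality because $\ps{u}\cap\ps{-u}=\es$ (Lemma~\ref{L:Eltaryps}\eqref{ps0}) forces $\gf\ps{u}\wedge\gf\ps{-u}=0$. This would prove the residual inequality, and with it the lemma.

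\emph{The main obstacle} is the spurious term $\gf\ps{-u}$. Lemma~\ref{L:Old31} cannot deliver a bound on $\gf\ps{a^u-\gl'b^u}$ free of $\gf\ps{-u}$ --- this is forced, since $b$ differs from $b^u$ (and $a$ from $a^u$) exactly by a multiple of $u$ --- so any attempt to bound $\gf\ps{a^u}$ or $\gf\ps{u}$ outright stalls on a genuine $a^u\leftrightarrow b^u$ circularity through $R$. The device that breaks it is to expose a \emph{single} residual meet $\gf\ps{u}\wedge\gf\ps{a^u}$ \emph{before} invoking closedness, so that the offending $\gf\ps{-u}$ appears only inside a meet with $\gf\ps{u}$ and is removed by the disjointness $\gf\ps{u}\wedge\gf\ps{-u}=0$; the auxiliary parameter $\gl'<\gl$ in the containment above is what keeps that meet available without re-introducing $\ps{u}$ on the wrong side.
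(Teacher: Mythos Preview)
Your proof is correct and follows essentially the same approach as the paper: both apply Lemma~\ref{L:Old31} to~$-u$ to obtain the bound~$(\ast)$, exploit a two-parameter containment $\ps{\,\cdot\,}\subseteq\ps{\gl b^u-a^u}\cup\ps{a^u-\gl'b^u}$ with $\gl'<\gl$, and then kill the unwanted term~$\gf\ps{-u}$ via the disjointness $\gf\ps{u}\wedge\gf\ps{-u}=0$. The only organizational difference is that the paper routes through the intermediate bound $\gf\ps{b^u}\leq\be'\vee\gf\ps{-u}$ (using the containment for~$\ps{b^u}$, which is a direct instance of Lemma~\ref{L:Eltaryps}\eqref{pscap}), whereas you isolate the residual meet $\gf\ps{u}\wedge\gf\ps{a^u}$ and apply the containment to~$\ps{a^u}$ (which, as you note, requires a one-line pointwise check rather than Lemma~\ref{L:Eltaryps} alone); the endgame is identical.
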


\begin{proof}
Since $-u$ satisfies the assumptions of Lemma~\ref{L:Old31}, there is $\gl_0\in\kk^+$ such that
 \begin{equation}\label{Eq:Majau-glbu}
 \gf\ps{a^u-\gl b^u}\leq\gf\ps{-u}\vee\gf\ps{-b}\vee\be
 \quad\text{whenever }\gl>\gl_0\,.
 \end{equation}
Let $\gl>\gl_0$; set $\be'\eqdef\gf\ps{\gl b^u-a^u}\vee\gf\ps{-b}\vee\be$ and $\be''\eqdef\be'\vee\gf\ps{-u}$.

We claim that $\gf\ps{b^u}\leq\be''$.
Pick $\gl'\in\kk$ with $\gl_0<\gl'<\gl$.
{F}rom the containment $\ps{b^u}\subseteq\ps{\gl b^u-a^u}\cup\ps{a^u-\gl'b^u}$ it follows that $\gf\ps{b^u}\leq\gf\ps{\gl b^u-a^u}\vee\gf\ps{a^u-\gl'b^u}$, which, by~\eqref{Eq:Majau-glbu} together with the definition of~$\be''$, entails $\gf\ps{b^u}\leq\be''$, as claimed.

We next claim that $\gf\ps{u}\leq\be''$.
If $b_ou_o=-1$ then $b^u=b+u$, thus
 \[
 \gf\ps{u}\leq\gf\ps{b^u}\vee\gf\ps{-b}\leq\be''\,,
 \]
as desired.
Suppose now that $b_o=0$; so $b^u=b$ and $a^u=a-u$.
{F}rom the containment $\ps{-a^u}\subseteq\ps{\gl b^u-a^u}\cup\ps{-b}$ we get $\gf\ps{-a^u}\leq\gf\ps{\gl b^u-a^u}\vee\gf\ps{-b}\leq\be'$, thus, since $\ps{u}\subseteq\ps{a}\cup\ps{-a^u}$, we get
 \begin{align*}
 \gf\ps{u}&\leq\gf\ps{a}\vee\gf\ps{-a^u}\\
 &\leq\gf\ps{a}\vee\be'\\
 &\leq\gf\ps{b}\vee\be'
 &&(\text{because }\gf\ps{a}\leq\gf\ps{b}\vee\be)\\
 &=\gf\ps{b^u}\vee\be'
 &&(\text{because }b^u=b)\\
 &\leq\be''\,, 
 \end{align*}
thus completing the proof of our second claim.
Since $\gf\ps{u}\leq\be''=\be'\vee\gf\ps{-u}$ and $\gf\ps{u}\wedge\gf\ps{-u}=0$, it follows that $\gf\ps{u}\leq\be'$.
\end{proof}

\begin{lemma}\label{L:ClStep}
Suppose that~$\cD$ is finite and the range of~$\gf$ is consonant in~$L$.
Then for all large enough $\gl\in\kk^+$, the map~$\gf$ extends to a lattice homomorphism $\psi\colon\Ops(\kkp{I}\cup\cD\cup\set{a-\gl b,\gl b-a})\to L$ such that
$\psi\ps{a-\gl b}\leq\gf\ps{-b}\vee\be$.
\end{lemma}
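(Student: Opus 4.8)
The plan is to recast the desired extension as an instance of Lemma~\ref{L:ExtIcupD}, with $c\eqdef a-\gl b$ rescaled by a positive scalar so as to be normalized; by homogeneity (Lemma~\ref{L:Eltaryps}\eqref{pshomog}) this rescaling changes neither~$\ps{a-\gl b}$ nor~$\ps{\gl b-a}$, hence leaves the target lattice $\Ops(\kkp{I}\cup\cD\cup\set{a-\gl b,\gl b-a})$ and the conclusion unaffected. I would first dispose of the degenerate case $\set{a,b}\subseteq\kkp{I}$: then $a-\gl b\in\kkp{I}$, so $\ps{a-\gl b},\ps{\gl b-a}\in\Ops\kkp{I}$, the domain is unchanged, $\psi=\gf$, and the required inequality $\gf\ps{a-\gl b}\leq\gf\ps{-b}\vee\be$ is exactly what closedness of $\gf\res\Ops\kkp{I}$ provides \emph{via} Lemma~\ref{L:Closatab}\eqref{HypClos}; the upgrade from ``some~$\gl$'' to ``all large enough~$\gl$'' is supplied by parts~\eqref{ConjDisjClos} and~\eqref{IncrClos} of that lemma.

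For the main case $\set{a,b}\not\subseteq\kkp{I}$, I would fix~$\gl$ beyond all the (finitely many) thresholds occurring below. For large~$\gl$ the sign of $a_o-\gl b_o$ stabilizes, so the normalization~$c$ of $a-\gl b$ satisfies $c_o=-\sgn(b_o)$ when $b_o\neq0$, and $c_o=a_o$ when $b_o=0$ (in which case necessarily $a\in\cD$). A routine sign check then shows that every $u\in\cD$ with $u-c\in\kkp{I}$, i.e.\ $u_o=c_o$, satisfies the hypothesis of Lemma~\ref{L:Old32}, while~$-u$ satisfies that of Lemma~\ref{L:Old31}. For such~$u$ one has $c-u=c^u$ (a positive multiple of $a^u-\gl b^u$) and $u-c=-c^u$ (a positive multiple of $\gl b^u-a^u$), so homogeneity turns Lemmas~\ref{L:Old31} and~\ref{L:Old32} into the two inequalities
\begin{align*}
\gf\ps{c-u}&\leq\gf\ps{-u}\vee\gf\ps{-b}\vee\be\,,\\
\gf\ps{u}&\leq\gf\ps{u-c}\vee\gf\ps{-b}\vee\be\,,
\end{align*}
valid for all large enough~$\gl$.

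The crux is the choice of the pair $(\bc^+,\bc^-)$. I would start from a pair $(\bc^+_0,\bc^-_0)$ produced by the ``furthermore'' clause of Lemma~\ref{L:ExtIcupD} (available since~$\cD$ is finite and $\gf$ has consonant range), which by the equivalence in that lemma already satisfies the whole system \eqref{Eq:c+-=0}--\eqref{Eq:LB2a-}, and then set $\bc^+\eqdef\bc^+_0\wedge\pI{\gf\ps{-b}\vee\be}$ and $\bc^-\eqdef\bc^-_0$. By construction $\bc^+\leq\gf\ps{-b}\vee\be$, the extra requirement of the statement; and since $\bc^+\leq\bc^+_0$ and $\bc^-$ is unchanged, \eqref{Eq:c+-=0}, the upper bound \eqref{Eq:UBa+}, and all three $\bc^-$-inequalities \eqref{Eq:UBa-}--\eqref{Eq:LB2a-} survive automatically. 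The delicate point is that shrinking~$\bc^+$ threatens the two lower bounds \eqref{Eq:LB1a+}, \eqref{Eq:LB2a+}, and this is where distributivity of~$L$ rescues us: combining \eqref{Eq:LB1a+} for~$\bc^+_0$ with the second displayed inequality yields
\[
\gf\ps{u}\leq\pI{\gf\ps{u-c}\vee\bc^+_0}\wedge\pI{\gf\ps{u-c}\vee(\gf\ps{-b}\vee\be)}=\gf\ps{u-c}\vee\bc^+\,,
\]
and \eqref{Eq:LB2a+} follows symmetrically from \eqref{Eq:LB2a+} for~$\bc^+_0$ together with the first displayed inequality. Feeding the verified pair back into Lemma~\ref{L:ExtIcupD} then yields the extension~$\psi$ with $\psi\ps{a-\gl b}=\bc^+\leq\gf\ps{-b}\vee\be$.

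The step I expect to be the main obstacle is precisely this verification. The ``natural'' value $\gf\ps{-b}\vee\be$ for $\psi\ps{c}$ is in general far too large to respect the upper bounds dictated by the relations of~$\Ops\cD$, so one cannot simply take $\bc^+=\gf\ps{-b}\vee\be$; the device of meeting it with an independently consistent value~$\bc^+_0$ and invoking distributivity to reconcile the two competing lower bounds is what makes the construction go through. The accompanying bookkeeping—deciding, according to the stabilized sign of~$c_o$, whether Lemma~\ref{L:Old31} or~\ref{L:Old32} applies to~$u$ as opposed to~$-u$—is the part most easily gotten wrong.
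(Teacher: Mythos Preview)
Your proposal is correct and follows essentially the same route as the paper: dispose of the case $\set{a,b}\subseteq\kkp{I}$ via Lemma~\ref{L:Closatab}, normalize $c\eqdef\xi(a-\gl b)$, obtain a pair $(\bc^+_0,\bc^-_0)$ from the ``furthermore'' clause of Lemma~\ref{L:ExtIcupD}, replace~$\bc^+_0$ by $\bc^+_0\wedge(\gf\ps{-b}\vee\be)$, and check that the only threatened inequalities \eqref{Eq:LB1a+}--\eqref{Eq:LB2a+} survive thanks to Lemmas~\ref{L:Old31} and~\ref{L:Old32} together with distributivity. Your organization---working directly with those $u\in\cD$ satisfying $u_o=c_o$ and applying Lemma~\ref{L:Old31} to~$-u$ and Lemma~\ref{L:Old32} to~$u$---is a minor repackaging of the paper's two-case split on the sign of $u_o+c_o$, but the argument is the same.
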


\begin{proof}
If $\set{a,b}\subseteq\kkp{I}$ then the desired conclusion follows from the restriction of~$\gf$ to~$\Ops\kkp{I}$ being closed (cf. Lemma~\ref{L:Closatab}).

Suppose from now on that $\set{a,b}\not\subseteq\kkp{I}$.
Let~$\gl_0\in\kk^+$ such that all inequalities in Lemmas~\ref{L:Old31} and~\ref{L:Old32} are satisfied whenever $u\in\cD$.
We may also assume that $a_o-\gl b_o$ has constant sign, necessarily nonzero, over $\gl>\gl_0$.
Fixing such a~$\gl$, the scalar $\xi\eqdef|a_o-\gl b_o|^{-1}$ is positive, and further, $c\eqdef\xi\cdot(a-\gl b)$ is a normalized element of $\kkp{J}\setminus\kkp{I}$.
For any $u\in\cD$, two cases may occur.

\setcounter{case}{0}

\begin{case}\label{Ca:uo+co=0}
$u_o+c_o=0$ \pup{i.e., $u+c\in\kkp{I}$}.
\end{case}

It follows that either $b_ou_o=1$ or ($b_o=0$ and $a_ou_o=-1$).
Then $u+c=c^u=\xi\cdot(a^u-\gl b^u)$, whence, applying Lemma~\ref{L:Old31},
 \begin{equation}\label{Eq:1/2}
 \gf\ps{u+c}=\gf\ps{a^u-\gl b^u}\leq
 \gf\ps{u}\vee\gf\ps{-b}\vee\be\,.
 \end{equation}

\begin{case}\label{Ca:uo=co}
$u_o=c_o$ \pup{i.e., $u-c\in\kkp{I}$}.
\end{case}

It follows that either $b_ou_o=-1$ or ($b_o=0$ and $a_ou_o=1$).
Then $u-c=-c^u=\xi\cdot(\gl b^u-a^u)$, whence $\gf\ps{u-c}=\gf\ps{\gl b^u-a^u}$, and so, by Lemma~\ref{L:Old32},
 \begin{equation}\label{Eq:2/2}
 \gf\ps{u}\leq\gf\ps{\gl b^u-a^u}\vee\gf\ps{-b}\vee\be
 =\gf\ps{u-c}\vee\gf\ps{-b}\vee\be\,.
 \end{equation}
Since~$\cD$ is finite and the range of~$\gf$ is consonant in~$L$, it follows from the last statement of Lemma~\ref{L:ExtIcupD} that the system of inequalities \eqref{Eq:c+-=0}--\eqref{Eq:LB2a-} has a solution $(\bc^+,\bc^-)$.
Set $\bc^*\eqdef\bc^+\wedge(\gf\ps{-b}\vee\be)$.
We claim that the pair $(\bc^*,\bc^-)$ also satisfies the system of inequalities \eqref{Eq:c+-=0}--\eqref{Eq:LB2a-}: indeed, since $(\bc^+,\bc^-)$ already satisfies that system and $\bc^*\leq\bc^+$, the only inequalities that need to be taken care of are~\eqref{Eq:LB1a+} and~\eqref{Eq:LB2a+}, which then follow from~\eqref{Eq:1/2} and~\eqref{Eq:2/2} above.

By Lemma~\ref{L:ExtIcupD}, the homomorphism~$\gf$ extends to a lattice homomorphism\linebreak $\psi\colon\Ops(\kkp{I}\cup\cD\cup\set{c,-c})\to L$ such that $\psi\ps{c}=\bc^*$ and $\psi\ps{-c}=\bc^-$.
In particular, $\psi\ps{a-\gl b}=\psi\ps{c}\leq\gf\ps{-b}\vee\be$.
\end{proof}

\section{Proof of the main result}\label{S:Main}

\begin{theorem}[The Closure Step]\label{T:ClosStep}
Let~$J\eqdef I\sqcup\set{o}$ be a countable set, let~$\kk$ be a countable totally ordered division ring, let~$L$ be a countable \cn\ \dzlat, and let $\bd_0,\bd_1\in L$ such that $\bd_0\wedge\bd_1=0$.
Then every closed $0$-lattice homomorphism $\gf\colon\Ops\kkp{I}\to L$ extends to a closed $0$-lattice homomorphism $\psi\colon\Ops\kkp{J}\to L$ such that $\psi\ps{\gd_o}=\bd_0$ and $\psi\ps{-\gd_o}=\bd_1$.
\end{theorem}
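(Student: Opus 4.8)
The plan is to build~$\psi$ as the common extension (directed union) of an ascending chain of $0$-lattice homomorphisms $\gf_n\colon\Ops(\kkp{I}\cup\cD_n)\to L$, where each~$\cD_n$ is a finite symmetric normalized subset of $\kkp{J}\setminus\kkp{I}$ and each~$\gf_n$ restricts to the given closed map~$\gf$ on~$\Ops\kkp{I}$. First I would start from $\cD_0\eqdef\set{\gd_o,-\gd_o}$, setting $\gf_0\ps{\gd_o}\eqdef\bd_0$ and $\gf_0\ps{-\gd_o}\eqdef\bd_1$; the existence of such a~$\gf_0$ extending~$\gf$ is exactly Lemma~\ref{L:ExtIcupD} applied with $\cD\eqdef\es$ and $c\eqdef\gd_o$, whose only nonvacuous requirement~\eqref{Eq:c+-=0} reads $\bd_0\wedge\bd_1=0$, which is our hypothesis.

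At each later stage I would dovetail two kinds of steps, both always available because~$L$, being \cn, has \emph{every} subset consonant, so the consonance hypotheses of Lemmas~\ref{L:ExtIcupD} and~\ref{L:ClStep} hold automatically at every stage. In a \emph{Domain Step} I pick the next normalized vector~$w$ from a fixed enumeration of $\kkp{J}\setminus\kkp{I}$ and use the last statement of Lemma~\ref{L:ExtIcupD} to extend~$\gf_n$ to $\cD_{n+1}\eqdef\cD_n\cup\set{w,-w}$; since~$\ps{x}$ depends only on the positive ray through~$x$ (Lemma~\ref{L:Eltaryps}\eqref{pshomog}), running through all such~$w$ guarantees $\bigcup_n\Ops(\kkp{I}\cup\cD_n)=\Ops\kkp{J}$. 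In a \emph{Closure Step} I take the next triple $(a,b,\be)$ from a fixed enumeration of $\kkp{J}\times\kkp{J}\times L$ (countable, as $\kk$, $J$ and~$L$ are countable), \emph{first} performing Domain Steps so that $\set{a,b}\subseteq\kkp{I}\cup\cD_n$; then, the restriction of~$\gf_n$ to~$\Ops\kkp{I}$ being the closed map~$\gf$, I check whether $\gf_n\ps{a}\leq\gf_n\ps{b}\vee\be$, and if so invoke Lemma~\ref{L:ClStep} to obtain $\gl\in\kk^+$ and an extension~$\gf_{n+1}$, adjoining the normalized multiple~$c$ of $a-\gl b$ together with~$-c$, such that $\gf_{n+1}\ps{a-\gl b}\leq\gf_n\ps{-b}\vee\be$ (if the inequality fails I do nothing). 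Standard bookkeeping ensures every triple is eventually treated and every~$w$ eventually adjoined.

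Setting $\psi\eqdef\bigcup_n\gf_n$, the directed union of homomorphisms on the increasing chain of $0$-sublattices $\Ops(\kkp{I}\cup\cD_n)$ is a $0$-lattice homomorphism with domain $\Ops\kkp{J}$; it extends~$\gf$ and satisfies $\psi\ps{\gd_o}=\bd_0$, $\psi\ps{-\gd_o}=\bd_1$ by construction. The key point for closedness is that once~$a$ and~$b$ have entered the domain their values never change, so the truth value of $\gf_n\ps{a}\leq\gf_n\ps{b}\vee\be$ stabilizes and equals that of $\psi\ps{a}\leq\psi\ps{b}\vee\be$. Hence whenever the latter holds, the Closure Step that handled $(a,b,\be)$ produced a witness~$\gl$ with $\psi\ps{a-\gl b}\leq\psi\ps{-b}\vee\be$, so by Lemma~\ref{L:Closatab}\eqref{HypClos} the map~$\psi$ is closed at every pair $(\ps{a},\ps{b})$ with $a,b\in\kkp{J}$. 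Since $\setm{\ps{x}}{x\in\kkp{J}}$ generates $\Ops\kkp{J}$ and $\psi[\Ops\kkp{J}]$ is consonant (complete normality once more), Lemma~\ref{L:PropagClos} upgrades this to closedness of~$\psi$ on all of $\Ops\kkp{J}$, finishing the proof.

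The genuinely hard work is already packaged into the single Closure Step, Lemma~\ref{L:ClStep}; at the level of this theorem I expect the main obstacle to be purely the bookkeeping that interleaves Domain and Closure Steps so that each closedness triple is processed only \emph{after} both of its vectors lie in the current domain (so that its hypothesis is already in final form), while the accumulated domains still exhaust~$\kkp{J}$. A secondary point to verify is that solving one closedness problem can never spoil a previously solved one: every step only enlarges the domain and leaves existing values untouched, so each witnessing inequality $\psi\ps{a-\gl b}\leq\psi\ps{-b}\vee\be$ survives into the limit.
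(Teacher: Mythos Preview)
Your proposal is correct and takes essentially the same approach as the paper's proof: both build~$\psi$ as an increasing union of partial homomorphisms~$\gf_n$ on $\Ops(\kkp{I}\cup\cD_n)$, initialize~$\gf_0$ via Lemma~\ref{L:ExtIcupD} with $\cD=\es$, alternate Domain Steps (last statement of Lemma~\ref{L:ExtIcupD}) with Closure Steps (Lemma~\ref{L:ClStep}), and conclude closedness via Lemmas~\ref{L:Closatab}\eqref{HypClos} and~\ref{L:PropagClos}. The only difference is cosmetic bookkeeping: you enumerate triples $(a,b,\be)\in\kkp{J}\times\kkp{J}\times L$ and process them one at a time (after forcing $a,b$ into the current domain), whereas the paper at each odd stage~$n$ handles all pairs in $\cD_n\times\cD_n$ against all~$\be_k$ with $k\leq n$ by a finite iteration of Lemma~\ref{L:ClStep}.
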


\begin{proof}
Denote by~$\cN$ the set of all normalized elements of~$\kkp{J}\setminus\kkp{I}$.
Since~$\kkp{J}$ and~$L$ are both countable, we can write $\cN=\setm{c_n}{n<\go}$ and $L=\setm{\be_n}{n<\go}$.

Define inductively finite symmetric subsets~$\cD_n$ of~$\cN$ and $0$-lattice homomorphisms $\gf_n\colon\Ops(\kkp{I}\cup\cD_n)\to L$, for $n<\go$, as follows.
Set $\cD_0\eqdef\set{\gd_o,-\gd_o}$.
By the argument of Wehrung \cite[Lemma~8.3]{MV1} (alternatively, apply Lemma~\ref{L:ExtIcupD} with $\cD\eqdef\es$), there is a unique $0$-lattice homomorphism $\gf_0\colon\Ops(\kkp{I}\cup\cD_0)\to L$ such that $\gf_0\ps{\gd_o}=\bd_0$ and $\gf_0\ps{-\gd_o}=\bd_1$.

Suppose~$\cD_n$ and~$\gf_n$ defined.

If~$n$ is even, set $\cD_{n+1}\eqdef\cD_n\cup\set{c_{n/2},-c_{n/2}}$.
By the last statement of Lemma~\ref{L:ExtIcupD}, $\gf_n$ extends to a lattice homomorphism $\gf_{n+1}\colon\Ops(\kkp{I}\cup\cD_{n+1})\to L$.

If~$n$ is odd, then a straightforward finite iteration of Lemma~\ref{L:ClStep} yields a finite symmetric subset~$\cD_{n+1}$ of~$\cN$ containing~$\cD_n$ and a $0$-lattice homomorphism $\gf_{n+1}\colon\Ops(\kkp{I}\cup\cD_{n+1})\to L$ such that for all $a,b\in\cD_n$ and all $k\leq n$,
 \begin{equation}\label{Eq:LocalClosf_n}
 \gf_n\ps{a}\leq\gf_n\ps{b}\vee\be_k\Rightarrow
 (\exists\gl\in\kk^+)
 (a-\gl b\in\cD_{n+1}\text{ and }
\gf_{n+1}\ps{a-\gl b}\leq\gf_n\ps{-b}\vee\be_k)\,.
 \end{equation}
By construction, $\cN=\bigcup_{n<\go}\cD_n$ and the union $\psi\eqdef\bigcup_{n<\go}\gf_n$ is a $0$-lattice homomorphism from $\Ops(\kkp{I}\cup\cN)=\Ops\kkp{J}$ to~$L$.
Moreover, owing to~\eqref{Eq:LocalClosf_n} together with Lemma~\ref{L:Closatab}, $\psi$ is closed at every pair $(\ps{a},\ps{b})$ where $a,b\in\kkp{J}$.
Since $\setm{\ps{x}}{x\in\kkp{J}}$ generates~$\Ops\kkp{J}$ as a sublattice, it follows from Lemma~\ref{L:PropagClos} that~$\psi$ is closed.
\end{proof}

\begin{theorem}\label{T:ClosStep2}
Let~$J=I\sqcup\omega$ be a countable set, let~$\kk$ be a countable totally ordered division ring, and  let~$L$ be a countable \cn\ \dzlat.
Then every closed $0$-lattice homomorphism $\psi_0\colon\Ops\kkp{I}\to L$ extends to a surjective closed $0$-lattice homomorphism $\psi\colon\Ops\kkp{J}\to L$.
\end{theorem}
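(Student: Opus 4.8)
The plan is to realize~$\psi$ as the direct limit of an ascending chain of closed homomorphisms, adjoining the coordinates indexed by~$\go$ one at a time by repeated use of Theorem~\ref{T:ClosStep}, and choosing the value at each new coordinate so as to force surjectivity. Fix an enumeration $L=\setm{\be_m}{m<\go}$, identify each natural number~$m$ with $\set{0,\dots,m-1}$, and set $I_m\eqdef I\sqcup m$; thus $I_0=I$, $I_{m+1}=I_m\sqcup\set{m}$, and $J=\bigcup_{m<\go}I_m$. I would build a chain of closed $0$-lattice homomorphisms $\psi_m\colon\Ops\kkp{I_m}\to L$, starting from the given~$\psi_0$, with each~$\psi_{m+1}$ extending~$\psi_m$.

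For the inductive step I would invoke Theorem~\ref{T:ClosStep} with~$I_m$ playing the role of~$I$, with $I_{m+1}=I_m\sqcup\set{m}$ playing the role of~$J$, and with the freshly added coordinate~$m$ playing the role of~$o$, taking $\bd_0\eqdef\be_m$ and $\bd_1\eqdef0$. The hypothesis $\bd_0\wedge\bd_1=0$ then holds trivially, so Theorem~\ref{T:ClosStep} supplies a closed $0$-lattice homomorphism $\psi_{m+1}\colon\Ops\kkp{I_{m+1}}\to L$ extending~$\psi_m$ with $\psi_{m+1}\ps{\gd_m}=\be_m$. Since every element of~$\kkp{J}$ has finite support, $\kkp{J}=\bigcup_{m<\go}\kkp{I_m}$, and each~$\Ops\kkp{I_m}$ is identified with a $0$-sublattice of~$\Ops\kkp{J}$ (cf.\ Section~\ref{S:OpsF}); hence the union $\psi\eqdef\bigcup_{m<\go}\psi_m$ is a well-defined $0$-lattice homomorphism $\Ops\kkp{J}\to L$ extending~$\psi_0$.

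Surjectivity is then immediate, since $\be_m=\psi_{m+1}\ps{\gd_m}=\psi\ps{\gd_m}$ for every $m<\go$. For closedness I would use Lemma~\ref{L:PropagClos} to reduce to showing that~$\psi$ is closed at every pair $(\ps{a},\ps{b})$ with $a,b\in\kkp{J}$; this reduction is legitimate because $\setm{\ps{x}}{x\in\kkp{J}}$ generates~$\Ops\kkp{J}$ and, as~$L$ is \cn, the range of~$\psi$ is automatically consonant in~$L$. Given such~$a$ and~$b$, both have finite support, hence lie in a common~$\kkp{I_m}$; there $\psi\ps{a}=\psi_m\ps{a}$ and $\psi\ps{b}=\psi_m\ps{b}$, so any witness $u\in\Ops\kkp{I_m}$ to the closedness of~$\psi_m$ at $(\ps{a},\ps{b})$ remains a witness for~$\psi$, the embedding $\Ops\kkp{I_m}\hookrightarrow\Ops\kkp{J}$ preserving order and finite joins and~$\psi$ extending~$\psi_m$.

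Once Theorem~\ref{T:ClosStep} is available this statement carries little new content: the argument is essentially the bookkeeping of a countable induction. The one step deserving care is the closedness of the direct limit, since a union of closed maps need not be closed in general. The crux is that the finite-support structure of~$\kkp{J}$ localizes each instance of the closedness condition to a single finite stage~$I_m$, whereupon Lemma~\ref{L:PropagClos} together with the complete normality of~$L$ promotes closedness-at-generators to full closedness.
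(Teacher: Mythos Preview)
The proposal is correct and follows essentially the same approach as the paper: iterate Theorem~\ref{T:ClosStep} along an enumeration of~$L$, sending the $m$th fresh generator~$\gd_m$ to~$\be_m$ (with $\bd_1=0$), and take the union. Your write-up is in fact more detailed than the paper's, which simply says ``define~$\psi$ as the common extension of all~$\psi_n$'' and stops; your explicit verification of closedness of the union via Lemma~\ref{L:PropagClos} and the finite-support localization to some~$I_m$ is the right way to fill in that step.
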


\begin{proof} 
Set $I_n\eqdef I\cup\set{0,1,\dots,n-1}$, whenever $n<\go$.
Moreover, let $L=\setm{\be_n}{n<\go}$.
By a repeated use of Theorem \ref{T:ClosStep}
we obtain a sequence of closed $0$-lattice homomorphisms $\psi_n\colon\Ops\kkp{I_n}\to L$, each~$\psi_n$ extending the previous one, and satisfying
 $\psi_{n+1}\ps{\gd_n}=\be_n$ and $\psi_{n+1}\ps{-\gd_n}=0$.
Define~$\psi$ as the common extension of all $\psi_n$. 
\end{proof}

\begin{theorem}[Main Theorem]\label{T:Main}
Let~$\kk$ be a countable totally ordered division ring, let~$E$ be a countable $\kk$-vector lattice, let~$L$ be a countable \cn\ \dzlat, and let $\gf\colon\Idc{E}\to L$ be a closed $0$-lattice homomorphism.
Then there are a countable $\kk$-vector lattice~$F$, a $\kk$,\lhom\ $f\colon E\to F$, and a lattice isomorphism $\gi\colon\Idc{F}\to L$ such that $\gf=\gi\circ\Idc{f}$.
\end{theorem}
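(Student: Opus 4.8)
The plan is to reduce to free vector lattices and then feed everything into Theorem~\ref{T:ClosStep2}, where the real difficulty resides. Since~$E$ is countable I would fix a countably infinite set~$I$ and a surjective $\kk$,\lhom\ $\pi\colon\FL(I,\kk)\to E$. The composite $\gf\circ\Idc\pi\colon\Idc\FL(I,\kk)\to L$ is then a $0$-lattice homomorphism, and it is closed: $\Idc\pi$ is closed (the vector-lattice form of Wehrung \cite[Proposition~2.6]{MV1}), $\gf$ is closed by hypothesis, and a composite of closed homomorphisms is readily seen to be closed. Transporting along the isomorphism $\Idc\FL(I,\kk)\cong\Ops\kkp{I}$ of Lemma~\ref{L:BakBey} and applying Theorem~\ref{T:ClosStep2} with $J\eqdef I\sqcup\go$, I obtain a surjective closed $0$-lattice homomorphism $\Psi\colon\Ops\kkp{J}\cong\Idc\FL(J,\kk)\to L$ extending $\gf\circ\Idc\pi$ along the canonical inclusion $\Idc\FL(I,\kk)\hookrightarrow\Idc\FL(J,\kk)$.

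The heart of the argument is to realise~$L$ as $\Idc$ of a quotient of $A\eqdef\FL(J,\kk)$. I would put $N\eqdef\setm{x\in A}{\Psi\seq{|x|}=0}$ and check it is an \lidl\ of~$A$: solidity, and closure under sums and scalar multiples, all follow from $\seq{|x|}\vee\seq{|y|}=\seq{|x|+|y|}$ together with~$\Psi$ being a $0$-lattice homomorphism. Writing $q\colon A\to F\eqdef A/N$ for the quotient map, the induced map $\Idc q\colon\Idc A\to\Idc F$ is surjective, and its kernel congruence identifies $\seq{x}$ with~$\seq{y}$ (for $x,y\in A^+$) exactly when $\seq{x}+N=\seq{y}+N$. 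I claim this congruence equals $\ker\Psi$. The inclusion $\ker\Idc q\subseteq\ker\Psi$ is routine, since $\seq{x}\leq\seq{y}\vee\seq{n}$ with $n\in N$ gives $\Psi\seq{x}\leq\Psi\seq{y}\vee\Psi\seq{n}=\Psi\seq{y}$. The reverse inclusion is exactly where closedness is used: from $\Psi\seq{x}\leq\Psi\seq{y}=\Psi\seq{y}\vee 0$ and closedness of~$\Psi$ at $(\seq{x},\seq{y})$ I obtain $w\in A^+$ with $\seq{x}\leq\seq{y}\vee\seq{w}$ and $\Psi\seq{w}\leq 0$, that is $w\in N$, whence $\seq{x}\subseteq\seq{y}+N$; by symmetry $\seq{x}+N=\seq{y}+N$. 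Hence $\ker\Psi=\ker\Idc q$, yielding an isomorphism $\gi\colon\Idc F\to L$ with $\gi\circ\Idc q=\Psi$.

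It remains to manufacture $f\colon E\to F$ and verify the factorisation. Let $\iota\colon\FL(I,\kk)\hookrightarrow A$ be the inclusion induced by $I\subseteq J$. For $x\in\ker\pi$ one has $\Idc\pi\seq{x}=0$, hence $\Psi\seq{\iota(x)}=(\gf\circ\Idc\pi)\seq{x}=0$, so $\iota(x)\in N$ and therefore $q\circ\iota$ annihilates $\ker\pi$; this yields a unique $\kk$,\lhom\ $f\colon E\to F$ with $f\circ\pi=q\circ\iota$. Finally, using $\gi\circ\Idc q=\Psi$ and $f\circ\pi=q\circ\iota$, functoriality gives $\gi\circ\Idc f\circ\Idc\pi=\gi\circ\Idc q\circ\Idc\iota=\Psi\circ\Idc\iota=\gf\circ\Idc\pi$, and since $\Idc\pi$ is surjective this forces $\gf=\gi\circ\Idc f$; as~$A$ is countable, so is~$F$. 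The genuine obstacle of the theorem is entirely absorbed into Theorem~\ref{T:ClosStep2} (and the Closure Step lemmas feeding it); granting that, the passage from a surjective closed homomorphism to an isomorphic copy of~$\Idc F$ is the clean \lidl\ bookkeeping above, with closedness invoked precisely once---to force $\ker\Psi$ to come from an \lidl.
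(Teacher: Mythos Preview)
Your proof is correct and follows essentially the same route as the paper: reduce to a free vector lattice via a surjection, extend the composite along $I\subseteq J=I\sqcup\go$ to a surjective closed homomorphism via Theorem~\ref{T:ClosStep2}, and pass to the quotient by the kernel $\ell$-ideal; the only difference is that you spell out directly (and correctly, using closedness of~$\Psi$ to force $\ker\Psi$ to be ideal-induced) the content of what the paper obtains by citing \cite[Lemma~2.5]{MV1}. One purely notational caveat: in this paper $\gi$ is a macro for~$\iota$, so your simultaneous use of~$\gi$ for the isomorphism $\Idc F\to L$ and~$\iota$ for the inclusion $\FL(I,\kk)\hookrightarrow A$ collides in the compiled output---rename one of them.
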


\begin{proof}
Throughout the proof let us fix a countably infinite set~
$I$, disjoint from~$\go$, and set $J\eqdef I\sqcup\go$.

Since~$E$ is countable, there exists a surjective $\kk$,\lhom\ $p\colon\FL(I,\kk)\twoheadrightarrow\nobreak E$.
By Wehrung \cite[Proposition~2.6]{MV1} (also valid for $\kk$-vector lattices, with the same proof), $\Idc{p}$ is a surjective closed $0$-lattice homomorphism from~$\Idc\FL(I,\kk)$ onto~$\Idc{E}$.
The map $\psi_0\eqdef\gf\circ\Idc{p}$ is a closed $0$-lattice homomorphism from~$\Idc\FL(I_0,\kk)$ to~$L$.
By Theorem~\ref{T:ClosStep2}, together with the canonical isomorphism $\Idc\FL(I,\kk)\cong\Ops\kkp{I}$ given by Lemma~\ref{L:BakBey}, $\psi_0$ can be extended to a  surjective closed $0$-lattice homomorphism from~$\Idc\FL(J,\kk)$ onto~$L$. 
(Here we identify $\Idc\FL(I,\kk)$ with its canonical image into $\Idc\FL(J,\kk)$.)

Then $Q\eqdef\setm{x\in\FL(J,\kk)}{\psi(\seq{x})=0}$ is an \lidl\ of~$\FL(J,\kk)$, and further, denoting by~$q$ the canonical projection from $\FL(J,\kk)$ onto $F\eqdef\FL(J,\kk)/Q$, there exists a unique isomorphism $\gi\colon\Idc{F}\to L$ such that $\psi=\gi\circ\Idc{q}$ (cf. Wehrung \cite[Lemma~2.5]{MV1}).
The argument can be followed on Figure~\ref{Fig:Main}.
\begin{figure}[htb]
\begin{tikzcd}
\centering
\Idc\FL(I,\kk)
\arrow[r,hookrightarrow]
\arrow[d,"\Idc{p}",twoheadrightarrow]
\arrow[dr,"\psi_0"] &
\Idc\FL(J,\kk)\arrow[d,"\psi",twoheadrightarrow]
\arrow[dr,"\Idc{q}",twoheadrightarrow] & \\
\Idc{E}\arrow[r,"\gf","\text{closed}"'] & L & \Idc{F}\arrow[l,"\gi"',"\cong"]
\end{tikzcd}
\caption{Illustrating the proof of Theorem~\ref{T:Main}}
\label{Fig:Main}
\end{figure}

Now observe the following implications, for any $x\in\FL(I,\kk)$:
 \begin{align*}
 p(x)=0&\Leftrightarrow(\Idc{p})(\seq{x})=0\\
 &\Rightarrow\psi_0(\seq{x})=0&&
 (\text{because }\psi_0=\gf\circ\Idc{p})\\
 &\Leftrightarrow\psi(\seq{x})=0\\
 &\Leftrightarrow q(x)=0\,. 
 \end{align*}
Hence, by the First Isomorphism Theorem (for $\kk$-vector lattices), there exists a unique $\kk$,\lhom\ $f\colon E\to F$ such that $f\circ p=q\res_{\FL(I,\kk)}$.
It follows that
 \[
 \gi\circ\Idc{f}\circ\Idc{p}=
 \gi\circ\Idc(q\res_{\FL(I,\kk)})=\psi\res_{\Idc\FL(I,\kk)}
 =\gf\circ\Idc{p}\,.
 \]
Since the map~$\Idc{p}$ is surjective, it follows that $\gf=\gi\circ\Idc{f}$.
\end{proof}

It is then an easy matter to extend Theorem~\ref{T:Main} to its following diagram version.

\begin{theorem}\label{T:Main+}
Let~$\kk$ be a countable totally ordered division ring and let~$T$ be a forest in which every element has countable height.
Then every $T$-indexed commutative diagram $\vec{L}\eqdef\vecm{L_s,\gf_{s,t}}{s\leq t\text{ in }T}$ of countable \cn\ \dzlat{s}, with \emph{closed} $0$-lattice homomorphisms, is isomorphic to $\Idc\vec{E}$ for some $T$-indexed commutative diagram~$\vec{E}$ of $\kk$-vector lattices with $\kk$,\lhom{s}.
\end{theorem}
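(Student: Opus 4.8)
The plan is to build the diagram $\vec E$ by transfinite recursion along the forest~$T$, calling Theorem~\ref{T:Main} at every node and forming directed colimits at the limit nodes. First I would record two elementary facts about closedness. \emph{(a)} A composite of closed $0$-lattice homomorphisms is again closed, and every lattice isomorphism is closed; both are immediate from Definition~\ref{D:Closedat}. \emph{(b)} If $\vecm{L_i,\gl_{i,j}}{i\le j}$ is a directed system of \dzlat{s} together with a cocone $\vecm{\mu_i\colon L_i\to M}{i}$ into a \dzlat~$M$ all of whose legs~$\mu_i$ are closed, then the induced homomorphism $g\colon\varinjlim_i L_i\to M$ is closed. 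Indeed, given $a,b$ in the colimit and $x\in M$ with $g(a)\leq g(b)\vee x$, pull $a,b$ back to a common stage~$L_i$ (using directedness), apply closedness of~$\mu_i$ there to obtain a witness $u\in L_i$, and push it forward along the colimit injection, which is a homomorphism.

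Since every principal ideal of~$T$ is well-ordered, the strict order~$<$ on~$T$ is well-founded, so I can define by recursion on~$<$, for each $t\in T$, a countable $\kk$-vector lattice~$E_t$, a family of $\kk$,\lhom{s} $\vecm{f_{s,t}\colon E_s\to E_t}{s\le t}$, and a lattice isomorphism $\iota_t\colon\Idc{E_t}\to L_t$, maintaining the invariant that the already-built data restricted to the strict past $P_t\eqdef\setm{s}{s<t}$ form a commutative diagram and that the $\iota_s$ ($s\in P_t$) constitute a natural isomorphism onto the restriction of~$\vec L$. At the node~$t$, the set~$P_t$ is, because~$T$ is a forest, a well-ordered chain of countable height, hence a directed poset, so the built diagram over~$P_t$ has a directed colimit $E_t'\eqdef\varinjlim_{s\in P_t}E_s$, with injections $\mu_s\colon E_s\to E_t'$ (the empty colimit, for a root~$t$, being the trivial vector lattice). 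Because the functor~$\Idc$ preserves directed colimits, the~$\iota_s$ assemble into an isomorphism $\theta_t\colon\Idc{E_t'}\to\varinjlim_{s\in P_t}L_s$ satisfying $\theta_t\circ\Idc{\mu_s}=\nu_s\circ\iota_s$, where~$\nu_s$ denotes the colimit injection $L_s\to\varinjlim_{s\in P_t}L_s$. The cocone $\vecm{\gf_{s,t}}{s\in P_t}$ induces a homomorphism $g_t\colon\varinjlim_{s\in P_t}L_s\to L_t$ with $g_t\circ\nu_s=\gf_{s,t}$, and~$g_t$ is closed by fact~\emph{(b)}. Hence $g_t\circ\theta_t\colon\Idc{E_t'}\to L_t$ is a closed $0$-lattice homomorphism between countable lattices.

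Now I apply Theorem~\ref{T:Main} to $g_t\circ\theta_t$, obtaining a countable $\kk$-vector lattice~$E_t$, a $\kk$,\lhom\ $h_t\colon E_t'\to E_t$, and an isomorphism $\iota_t\colon\Idc{E_t}\to L_t$ with $g_t\circ\theta_t=\iota_t\circ\Idc{h_t}$. Setting $f_{s,t}\eqdef h_t\circ\mu_s$ for $s\in P_t$ and $f_{t,t}\eqdef\id$ produces transition maps that are automatically functorial over the chain $\downarrow t$, since they are built from the colimit injections. Naturality is a short diagram chase: for $s<t$, $\iota_t\circ\Idc{f_{s,t}}=\iota_t\circ\Idc{h_t}\circ\Idc{\mu_s}=g_t\circ\theta_t\circ\Idc{\mu_s}=g_t\circ\nu_s\circ\iota_s=\gf_{s,t}\circ\iota_s$. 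Countability is preserved throughout, because a countable directed colimit of countable vector lattices is countable and~$\kk$ is countable, so Theorem~\ref{T:Main} always applies; this is precisely where the hypothesis that every node has countable height is used. Collecting the~$E_t$ and~$f_{s,t}$ gives a $T$-indexed commutative diagram~$\vec E$, and $\vecm{\iota_t}{t\in T}$ is an isomorphism $\Idc\vec E\to\vec L$, as required.

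The only genuinely load-bearing point, beyond the recursive bookkeeping, is the limit step: I must use both that~$\Idc$ converts the directed colimit of the~$E_s$ into the directed colimit of the~$L_s$ and that the canonical map out of a colimit of closed maps remains closed (fact~\emph{(b)}). Everything else is an iteration of Theorem~\ref{T:Main} together with the routine verification that the construction keeps all lattices and vector lattices countable and all squares commuting. I expect that the main care is needed in formulating the inductive invariant tightly enough that coherence and naturality over the whole forest follow from the chain-local verifications at each node.
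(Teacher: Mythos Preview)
Your proposal is correct and follows essentially the same approach as the paper's outline: transfinite recursion along~$T$, invoking Theorem~\ref{T:Main} at each node and using that~$\Idc$ preserves directed colimits at limit nodes. Your treatment is in fact slightly cleaner than the paper's sketch, since you handle all nodes uniformly (form the colimit~$E_t'$ of the strict past, then apply Theorem~\ref{T:Main}) rather than distinguishing successor from limit stages, and you make explicit the two closedness facts---stability under composition and the preservation of closedness by the induced map out of a directed colimit---that the paper leaves to the reader.
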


We illustrate Theorem~\ref{T:Main+} on Figure~\ref{Fig:Main1}, with~$o$ a minimal element of~$T$ and~$p$ an atom of~$T$.

\begin{figure}[htb]
\begin{tikzcd}
\centering
\Idc{E_{o}}\arrow[d,"\chi_{o}","\cong"']\arrow[r,"\Idc{f_{o,p}}"] &
\Idc{E_p}\arrow[d,"\chi_p","\cong"']\arrow[r] & \cdots & 
\cdots\arrow[r] & \Idc{E_t}\arrow[d,"\chi_t","\cong"']
\arrow[r] & \cdots\\
L_{o}\arrow[r,"\gf_{o,p}","\text{closed}"']
& L_p\arrow[r,"\text{closed}"'] & \cdots &
\cdots\arrow[r,"\text{closed}"'] & L_t\arrow[r,"\text{closed}"']
& \cdots
\end{tikzcd}
\caption{Illustrating Theorem~\ref{T:Main+}}
\label{Fig:Main1}
\end{figure}

\begin{proof}[Outline of proof]
An inductive argument within~$T$, similar, \emph{mutatis mutandis}, to the one of Plo\v{s}\v{c}ica and Wehrung \cite[Theorem~7.3]{MV2}, with Theorem~\ref{T:Main} used instead of \cite[Theorem~7.1]{MV2}, $E_s$ instead of $\FL(I_s,\kk)$, $f_{s,t}$ instead of~$\eta_{I_s,I_t}$, and all maps~$\chi_t$ now being isomorphisms.
At successor stages we apply Theorem~\ref{T:Main}, while at limit stages we apply the preservation of directed colimits by the functor~$\Idc$.
\end{proof}

The above argument yields in fact that \emph{every partial lifting of~$\vec{L}$, defined on a lower subset of~$T$, can be extended to a full lifting of~$\vec{L}$}.

\begin{corollary}\label{C:Main1}
The following are equivalent, for any countable totally ordered division ring~$\kk$ and any \dzlat~$L$ of cardinality at most~$\aleph_1$:
\begin{enumeratei}
\item\label{Reprkk}
There exists a $\kk$-vector lattice~$E$ such that $L\cong\Idc{E}$.

\item\label{Reprll}
There exists an Abelian \lgrp~$G$ such that $L\cong\Idc{G}$.

\item\label{cnCBD}
The lattice~$L$ is \cn\ and it has countably based differences.

\end{enumeratei}
\end{corollary}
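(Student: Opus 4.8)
The plan is to prove Corollary~\ref{C:Main1} by closing the circle of implications $\eqref{Reprkk}\Rightarrow\eqref{Reprll}\Rightarrow\eqref{cnCBD}\Rightarrow\eqref{Reprkk}$, treating the first two implications as essentially known and concentrating all the new work on the final implication, which is where the Main Theorem is applied. First I would dispose of $\eqref{Reprkk}\Rightarrow\eqref{Reprll}$: given $L\cong\Idc{E}$ for a $\kk$-vector lattice~$E$, the underlying Abelian \lgrp\ reduct of~$E$ has the same \lidl{s} as~$E$ (as noted at the end of the introduction, every vector lattice over an Archimedean totally ordered field has the same \lidl{s} as an \lgrp\ as it does as a vector lattice), so setting $G$ to be that reduct gives $L\cong\Idc{G}$. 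For $\eqref{Reprll}\Rightarrow\eqref{cnCBD}$ I would invoke the standard facts recalled in the introduction: $\Idc{G}$ is always \cn\ (complete normality of the spectrum, \emph{via} Monteiro's translation) and always has countably based differences (every \lrep\ lattice does, by the remark following Definition~\ref{D:CBD}). These two directions require no new ideas.

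The substance is $\eqref{cnCBD}\Rightarrow\eqref{Reprkk}$, and the key move is to produce, from the abstract hypotheses on~$L$, a \emph{closed} $0$-lattice homomorphism onto~$L$ to which Theorem~\ref{T:Main} can be applied. The strategy is an $\aleph_1$-indexed directed-colimit (or transfinite chain) argument: write $L$ as the union of an increasing $\aleph_1$-chain $\vecm{L_\xi}{\xi<\go_1}$ of \emph{countable} $0$-sublattices, chosen continuous at limits, and so that each inclusion $L_\xi\hookrightarrow L_{\xi+1}$ is closed. The countably-based-differences hypothesis is precisely what lets the chain be arranged to consist of countable pieces with the inclusion maps closed: for each pair $a,b$ one can absorb a fixed countable coinitial set of the difference $\setm{x}{a\leq b\vee x}$ into the next stage, which forces closedness of the inclusion. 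Each $L_\xi$, being a countable \cn\ \dzlat, is representable by the countable case (Wehrung~\cite{MV1}), say $L_\xi\cong\Idc{E_\xi}$; and each closed inclusion $L_\xi\hookrightarrow L_{\xi+1}$ is then liftable by Theorem~\ref{T:Main} to a $\kk$,\lhom\ $E_\xi\to E_{\xi+1}$ compatible with the identifications, since Theorem~\ref{T:Main} lifts a closed $0$-lattice homomorphism out of a representable lattice along the functor~$\Idc$. Taking the directed colimit $E\eqdef\varinjlim E_\xi$ and using that $\Idc$ preserves directed colimits yields $\Idc{E}\cong\varinjlim L_\xi=L$, as desired. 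The chain being continuous at limits matches the colimit-preservation, so the limit stages are automatic.

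The main obstacle I anticipate is building the $\aleph_1$-chain with \emph{all} the inclusions closed while keeping each stage countable. A single closed extension is governed by Definition~\ref{D:Closedat}: for the inclusion $L_\xi\hookrightarrow L_{\xi+1}$ to be closed, every closedness instance $a\leq b\vee x$ with $a,b\in L_\xi$ and $x\in L_{\xi+1}$ must be witnessed back in~$L_\xi$, and this is exactly the content of the countably-based-differences condition once the countable coinitial subsets of the relevant difference sets have been thrown into~$L_\xi$. The bookkeeping is a standard closure/Löwenheim–Skolem construction: at each step close off under the lattice operations, under the finitely many new witnesses demanded by closedness, and under a countable coinitial family for each difference; since each operation adds only countably many elements and there are countably many demands at a countable stage, the stages stay countable. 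The genuinely delicate point, and the reason the bound is exactly~$\aleph_1$, is that this only works when $L$ has size at most~$\aleph_1$: the chain has length~$\go_1$ and each inclusion must be simultaneously closed and representability-preserving, so one cannot iterate past~$\aleph_1$ without leaving the countable regime in which Theorem~\ref{T:Main} operates. This is consistent with the sharpness of~$\aleph_1$ noted after the displayed consequence in the introduction, coming from the \cn\ counterexample of size~$\aleph_2$ in Wehrung~\cite{Ceva}. I would also remark that the forest version, Theorem~\ref{T:Main+}, is not needed here; the plain chain suffices, though Theorem~\ref{T:Main+} would give the stronger functorial statement if one wanted to lift the whole chain diagram at once rather than step by step.
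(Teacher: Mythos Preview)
Your argument for \eqref{cnCBD}$\Rightarrow$\eqref{Reprkk} is essentially the paper's: build an $\go_1$-chain of countable \cn\ $0$-sublattices with closed inclusion maps (the paper does this by closing under a fixed sequence~$\vecm{\sd_n}{n<\go}$ of difference operations witnessing CBD, which is exactly your L\"owenheim--Skolem closure), then lift the chain by iterating Theorem~\ref{T:Main} and take the colimit. The paper invokes Theorem~\ref{T:Main+} on the chain directly, but as you note this amounts to the same thing. One point you leave implicit and should state: closure under the coinitial difference witnesses, together with complete normality of~$L$, is what forces each~$L_\xi$ itself to be \cn\ (take $u'\eqdef a\wedge(a\sd_nb)$, $v'\eqdef b\wedge(b\sd_ma)$ for suitable~$m,n$ coming from a splitting pair in~$L$); without this, applying Theorem~\ref{T:Main} at each step would not be justified.

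There is, however, a genuine gap in your \eqref{Reprkk}$\Rightarrow$\eqref{Reprll}. You pass to the underlying Abelian \lgrp\ of~$E$ and claim its \lidl{s} coincide with the $\kk$-vector-lattice \lidl{s}, citing the remark at the end of the Introduction. But that remark is stated only for \emph{Archimedean} totally ordered fields, whereas~$\kk$ here is an arbitrary countable totally ordered division ring, which may be non-Archimedean (e.g.\ $\QQ(t)$ with~$t$ infinite). In that case principal \lidl{s} differ: in $E=\kk$ one has $\seq{1}_{\kk\text{-vect}}=\kk$ while $\seq{1}_{\text{\lgrp}}$ is the proper convex subgroup of bounded elements, so $\Idc$ as \lgrp\ is not the two-element lattice. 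The fix is the paper's route: prove \eqref{Reprkk}$\Rightarrow$\eqref{cnCBD} directly (the standard argument needs only that~$\kk^+$ has a countable cofinal subset, which countability of~$\kk$ guarantees), and then obtain \eqref{cnCBD}$\Rightarrow$\eqref{Reprll} as the special case $\kk=\QQ$ of \eqref{cnCBD}$\Rightarrow$\eqref{Reprkk}.
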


\begin{proof}
\eqref{Reprll}$\Rightarrow$\eqref{cnCBD} is well known (and easy), see Cignoli \emph{et al.}~\cite[Thm.~2.2]{CGL}; see also Iberkleid \emph{et al.} \cite[Prop.~4.1.2]{IMM2011}, Wehrung~\cite[Lemma~10.1]{MV1}.
A similar argument also yields the implication \eqref{Reprkk}$\Rightarrow$\eqref{cnCBD} (the countability assumption on~$\kk$ can there be weakened by just saying that~$\kk$ has a countable cofinal sequence).

\eqref{cnCBD}$\Rightarrow$\eqref{Reprkk}.
Let~$L$ be a \cn\ \dzlat\ with countably based differences, of cardinality at most~$\aleph_1$.
Let us write $L=\setm{e_{\xi}}{\xi<\go_1}$.
The assumption that~$L$ has countably based differences means that there exists a sequence $\vecm{\sd_n}{n<\go}$ of binary operations on~$L$ such that for all $a,b,c\in L$, $a\leq b\vee c$ if{f} there exists $n<\go$ such that $a\sd_nb\leq c$.
For each $\ga<\go_1$, the closure~$L_{\ga}$ of $\set{0}\cup\setm{e_{\xi}}{\xi<\ga}$ under the lattice operations together with all~$\sd_n$ is a countable \cn\ $0$-sublattice of~$L$, and~$L$ is the ascending union of the~$L_{\ga}$.

Moreover, due to all~$L_{\xi}$ be closed under all operations~$\sd_n$, the inclusion map from~$L_{\ga}$ into~$L_{\gb}$ is a closed $0$-lattice embedding whenever $\ga<\gb<\go_1$.
Hence, by applying Theorem~\ref{T:Main+} to the well-ordered chain $\vec{L}\eqdef\vecm{L_{\ga}}{\ga<\go_1}$ of sublattices of~$L$, with the inclusion maps as transition maps (and $T\eqdef\go_1$), we obtain an $\go_1$-indexed commutative diagram $\vec{E}=\vecm{E_{\ga},f_{\ga,\gb}}{\ga\leq\gb<\go_1}$ of $\kk$-vector lattices such that $\Idc\vec{E}\cong\vec{L}$.
Setting $E\eqdef\varinjlim\vec{E}$, the preservation of all directed colimits by the functor~$\Idc$, together with the universal property of the colimit, yields $\Idc{E}\cong L$.

\eqref{cnCBD}$\Rightarrow$\eqref{Reprll} is just \eqref{cnCBD}$\Rightarrow$\eqref{Reprkk} with $\kk\eqdef\QQ$.
\end{proof}

Recall from Wehrung \cite[\S~9]{RAlg} that the direction \eqref{cnCBD}$\Rightarrow$\eqref{Reprkk} from Corollary~\ref{C:Main1} fails at every uncountable~$\kk$, even for countable lattices~$L$.

\section{Open problems}\label{S:Pbs}

%Following%
%\footnote{
%Actually, Mart{\'\i}nez states his condition for \emph{algebraic} lattices; here for the ideal lattice of~$L$.
%}
%Mart{\'\i}nez~\cite{Mart1973}, say that a \dzlat~$L$ is \emph{Archimedean} if the maximal \lidl{s} of any principal \lidl\ of~$L$ intersect to~$\set{0}$.
%Equivalently, whenever $0<a<b$ in~$L$, there exists $c\in L$ such that $a\vee c=b$ whereas $c<b$.
%In particular, for any Abelian \lgrp~$G$, $\Idc{G}$ is an Archimedean lattice if{f}~$G$ is an Archimedean \lgrp.

Mart{\'\i}nez asks in \cite[Question~II]{Mart1973} for a characterization of all lattices~$\Idc{G}$ for~$G$ an Archimedean \lgrp.
Since all members of the class~$\cA(\gq,\vec{A})$ introduced in Wehrung \cite[Notation~12.2]{NonElt} are Archimedean \lgrp{s} whenever $\gq>\go$ (see the comments following Notation~12.2 in~\cite{NonElt}), \cite[Theorem~12.3]{NonElt} already yields a negative answer to Mart{\'\i}nez' question: \emph{the class of all isomorphic copies of~$\Idc{G}$, for~$G$ Archimedean, is not the class of models of any class of~$\scL_{\infty\gl}$ sentences for any~$\gl$; nor is it co-projective} (as defined on page~\pageref{co-proj}).

However, \emph{the lattices in~$\cA(\gq,\vec{A})$ have no top element}, because the arrows in the cube diagram~$\vec{A}$ from Wehrung~\cite{Ceva} are not unit-preserving.
This suggests the following problem, already hinted at in the comments following \cite[Corollary~12.9]{NonElt}.

\begin{problem}\label{Pb:ArchUnit}
Is the class of all isomorphic copies of~$\Idc{G}$, for~$G$ an Archimedean \lgrp\ \emph{with order-unit}, the class of all models of some~$\scL_{\infty\infty}$ sentence?
\end{problem}

Complete normality is a first-order sentence, while ``countably based differences'' is an~$\scL_{\go_1\go_1}$ sentence.
The following problem asks for an extension of Corollary~\ref{C:Main1} to real spectra of commutative unital rings.

\begin{problem}\label{Pb:RSAl1}
Denote by~$\Phi(A)$ the Stone dual of the real spectrum~$\Specr{A}$ of any commutative unital ring~$A$.
Can the isomorphic copies of the~$\Phi(A)$, with cardinality at most~$\aleph_1$, be characterized (within structures of size $\leq\aleph_1$) by an~$\scL_{\go_1\go_1}$ sentence?
\end{problem}

In more detail: it is well known that every~$\Phi(A)$ is a \cn\ bounded \dlat.
The main result of Wehrung~\cite{RAlg} states that every \emph{countable} \cn\ bounded \dlat\ can be obtained in this way.
As observed in Wehrung~\cite{MVRS}, this fails in a strong sense for lattices of cardinality~$\aleph_1$: \emph{there exists an Abelian \lgrp~$G$ of size~$\aleph_1$ such that~$\Idc{G}$ is not a homomorphic image of any~$\Phi(A)$}.
It is also observed in~\cite{MVRS} that a homomorphic image of some~$\Phi(A)$ of cardinality~$\aleph_1$ need not be isomorphic to any~$\Phi(A')$.

The cardinality assumption in Problem~\ref{Pb:RSAl1} cannot be omitted: by Mellor and Tressl~\cite{MelTre2012}, the class~$\cR$ of isomorphic copies of all~$\Phi(A)$ is not the class of models of any class of~$\scL_{\infty\gl}$ sentences for fixed~$\gl$.
This result is extended in Wehrung~\cite{AccProj}, by proving that~$\cR$ is not co-projective.

For any chain~$T$, denote by~$\cO(T)$ the sublattice of the powerset of~$T$ generated by all open intervals $\setm{x\in T}{x<a}$ and $\setm{x\in T}{x>a}$ for $a\in T$.
It is not hard to verify that~$\cO(T)$ is a \cn\ bounded \dlat.

\begin{problem}\label{Pb:O(T)}
Let~$T$ be a chain such that~$\cO(T)$ has countably based differences.
Does there exist an Abelian \lgrp~$G$ such that $\cO(T)\cong\Idc{G}$?
\end{problem}

By Corollary~\ref{C:Main1}, the answer to Problem~\ref{Pb:O(T)} is positive for $\card{T}\leq\aleph_1$.

%\bibliographystyle{amsplain}
%\bibliography{TM_biblio}

\begin{thebibliography}{10}

\bibitem{AnFe}
Marlow Anderson and Todd Feil, \emph{{Lattice-Ordered Groups}}, Reidel Texts in
  the Mathematical Sciences, D. Reidel Publishing Co., Dordrecht, 1988, An
  introduction. \MR{937703 (90b:06001)}

\bibitem{Baker1968}
Kirby~A. Baker, \emph{Free vector lattices}, Canad. J. Math. \textbf{20}
  (1968), 58--66. \MR{0224524}

\bibitem{Bern1969}
Simon~J. Bernau, \emph{Free abelian lattice groups}, Math. Ann. \textbf{180}
  (1969), 48--59. \MR{0241340}

\bibitem{CGL}
Roberto Cignoli, Daniel Gluschankof, and Fran{\c{c}}ois Lucas, \emph{Prime
  spectra of lattice-ordered abelian groups}, J. Pure Appl. Algebra
  \textbf{136} (1999), no.~3, 217--229. \MR{1675803}

\bibitem{DelMad1994}
Charles~N. Delzell and James~J. Madden, \emph{A completely normal spectral
  space that is not a real spectrum}, J. Algebra \textbf{169} (1994), no.~1,
  71--77. \MR{1296582}

\bibitem{Larder}
Pierre Gillibert and Friedrich Wehrung, \emph{From {O}bjects to {D}iagrams for
  {R}anges of {F}unctors}, Lecture Notes in Mathematics, vol. 2029, Springer,
  Heidelberg, 2011. \MR{2828735 (2012i:18001)}

\bibitem{Hoch1969}
Melvin Hochster, \emph{Prime ideal structure in commutative rings}, Trans.
  Amer. Math. Soc. \textbf{142} (1969), 43--60. \MR{0251026}

\bibitem{IMM2011}
Wolf Iberkleid, Jorge Mart{\'{\i}}nez, and Warren~Wm. McGovern, \emph{Conrad
  frames}, Topology Appl. \textbf{158} (2011), no.~14, 1875--1887. \MR{2823701}

\bibitem{Keim1971}
Klaus Keimel, \emph{The representation of lattice-ordered groups and rings by
  sections in sheaves},  (1971), 1--98. Lecture Notes in Math., Vol. 248.
  \MR{0422107}

\bibitem{LenDiN2020}
Giacomo Lenzi and Antonio Di~Nola, \emph{The spectrum problem for {A}belian
  {$\ell$}-groups and {MV}-algebras}, Algebra Universalis \textbf{81} (2020),
  no.~3, Paper No. 39, 43. \MR{4120404}

\bibitem{MaddTh}
James~J. Madden, \emph{Two methods in the study of $k$-vector lattices}, Ph.D.
  thesis, Wesleyan University, 1984. \MR{2633496}

\bibitem{Mart1973}
Jorge Mart\'{\i}nez, \emph{Archimedean lattices}, Algebra Universalis
  \textbf{3} (1973), 247--260. \MR{0349503}

\bibitem{MelTre2012}
Timothy Mellor and Marcus Tressl, \emph{Non-axiomatizability of real spectra in
  {$\mathscr{L}_{\infty\lambda}$}}, Ann. Fac. Sci. Toulouse Math. (6)
  \textbf{21} (2012), no.~2, 343--358. \MR{2978098}

\bibitem{Mont1954}
Ant{\'o}nio~A. Monteiro, \emph{L'arithm\'etique des filtres et les espaces
  topologiques}, Segundo symposium sobre algunos problemas matem\'aticos que se
  est\'an estudiando en {L}atino {A}m\'erica, {J}ulio, 1954, Centro de
  Cooperaci\'on Cientifica de la UNESCO para Am\'erica Latina, Montevideo,
  Uruguay, 1954, pp.~129--162. \MR{0074805}

\bibitem{Mund86}
Daniele Mundici, \emph{{Interpretation of {AF} {$C^\ast$}-algebras in
  {\L}ukasiewicz sentential calculus}}, J. Funct. Anal. \textbf{65} (1986),
  no.~1, 15--63. \MR{819173 (87k:46146)}

\bibitem{Mund2011}
\bysame, \emph{Advanced {{\L}}ukasiewicz {C}alculus and {MV}-{A}lgebras},
  Trends in Logic---Studia Logica Library, vol.~35, Springer, Dordrecht, 2011.
  \MR{2815182}

\bibitem{Plo21}
Miroslav Plo\v{s}\v{c}ica, \emph{Cevian properties in ideal lattices of
  {A}belian {$\ell$}-groups}, Forum Math. \textbf{33} (2021), no.~6,
  1651--1658. \MR{4333993}

\bibitem{MV2}
Miroslav Plo\v{s}\v{c}ica and Friedrich Wehrung, \emph{{Spectral subspaces of
  spectra of Abelian lattice-ordered groups in size aleph one}}, hal-03696927,
  preprint, June 2022.

\bibitem{RumYan2009}
Wolfgang Rump and Yi~Chuan Yang, \emph{The essential cover and the absolute
  cover of a schematic space}, Colloq. Math. \textbf{114} (2009), no.~1,
  53--75. \MR{2457279}

\bibitem{Schrij1986}
Alexander Schrijver, \emph{Theory of {L}inear and {I}nteger {P}rogramming},
  Wiley-Interscience Series in Discrete Mathematics, John Wiley \& Sons, Ltd.,
  Chichester, 1986, A Wiley-Interscience Publication. \MR{874114}

\bibitem{MV1}
Friedrich Wehrung, \emph{Spectral spaces of countable {A}belian lattice-ordered
  groups}, Trans. Amer. Math. Soc. \textbf{371} (2019), no.~3, 2133--2158.
  \MR{3894048}

\bibitem{Ceva}
\bysame, \emph{Cevian operations on distributive lattices}, J. Pure Appl.
  Algebra \textbf{224} (2020), no.~4, 106202. \MR{4021916}

\bibitem{NonElt}
\bysame, \emph{From noncommutative diagrams to anti-elementary classes}, J.
  Math. Log. \textbf{21} (2021), no.~2, Paper No. 2150011, 56~p. \MR{4290500}

\bibitem{RAlg}
\bysame, \emph{Real spectra and {$\ell$}-spectra of algebras and vector
  lattices over countable fields}, J. Pure Appl. Algebra \textbf{226} (2022),
  no.~4, Paper No. 106861, 25~p. \MR{4301012}

\bibitem{AccProj}
\bysame, \emph{Projective classes as images of accessible functors}, J. Logic
  Comput. \textbf{33} (2023), no.~1, 90--135. \MR{4535824}

\bibitem{MVRS}
\bysame, \emph{Real spectrum versus {$\ell$}-spectrum via {B}rumfiel spectrum},
  Algebr. Represent. Theory \textbf{26} (2023), no.~1, 137--158. \MR{4546136}

\end{thebibliography}

\providecommand{\noopsort}[1]{}\def\cprime{$'$}
  \def\polhk#1{\setbox0=\hbox{#1}{\ooalign{\hidewidth
  \lower1.5ex\hbox{`}\hidewidth\crcr\unhbox0}}} \providecommand{\bysame}{\leavevmode\hbox to3em{\hrulefill}\thinspace}
\providecommand{\MR}{\relax\ifhmode\unskip\space\fi MR }
% \MRhref is called by the amsart/book/proc definition of \MR.
\providecommand{\MRhref}[2]{%
  \href{http://www.ams.org/mathscinet-getitem?mr=#1}{#2}
}
\providecommand{\href}[2]{#2}

\end{document}